\setlist[itemize]{leftmargin=*}
\setlist[enumerate]{leftmargin=*}
\newtheorem{theorem}{Theorem}[section]
\newtheorem{corollary}[theorem]{Corollary}
\newtheorem{lemma}[theorem]{Lemma}
\newtheorem{problem}[theorem]{Problem}
\theoremstyle{definition}
\newtheorem{remark}[theorem]{Remark}
\newtheorem{example}[theorem]{Example}
\numberwithin{equation}{section}
\newcommand{\N}{\mathbb{N}}
\newcommand{\R}{\mathbb{R}}
\newcommand{\Q}{\mathbb{Q}}
\newcommand{\Z}{\mathbb{Z}}
\renewcommand{\pmod}[1]{\,\,(\operatorname{mod} #1)}
\newcommand{\norm}[1]{\|#1\|}
\renewcommand{\geq}{\geqslant}
\renewcommand{\leq}{\leqslant}
\let\oldenumerate=\enumerate
	\def\enumerate{
	\oldenumerate
	\setlength{\itemsep}{5pt}
	}
\let\olditemize=\itemize
	\def\itemize{
	\olditemize
	\setlength{\itemsep}{5pt}
	}
\begin{document}


\baselineskip=17pt



\title[$p$-adic quotient sets]{$p$-adic quotient sets}

\author[S.R.~Garcia]{Stephan Ramon Garcia}
\address{Department of Mathematics\\Pomona College\\610 N. College Ave., Claremont, CA 91711} 
\email{stephan.garcia@pomona.edu}
\urladdr{http://pages.pomona.edu/~sg064747}
\thanks{First author partially supported by NSF grant DMS-1265973.}

\author[Y.X.~Hong]{Yu Xuan Hong}

\author[F.~Luca]{Florian Luca}
\address{School of Mathematics\\University of the Witwatersrand\\Private Bag 3, Wits 2050, Johannesburg, South Africa\\
Max Planck Institute for Mathematics, Vivatgasse 7, 53111 Bonn, Germany\\
Department of Mathematics, Faculty of Sciences, University of Ostrava, 30 dubna 22, 701 03
Ostrava 1, Czech Republic}
\email{Florian.Luca@wits.ac.za}

\author[E.~Pinsker]{Elena Pinsker}

\author[C.~Sanna]{Carlo Sanna}
\address{Department of Mathematics\\Universit\'a degli Studi di Torino\\Via Carlo Alberto, 10, 10123 Torino, Italy}
\email{carlo.sanna.dev@gmail.com}
	
\author[E.~Schechter]{Evan Schechter}

\author[A.~Starr]{Adam Starr}

\begin{abstract}
For $A \subseteq \mathbb{N}$, the question of when $R(A) = \{a/a' : a, a' \in A\}$ is dense in the positive real numbers $\mathbb{R}_+$ has been examined by many authors over the years.  In contrast, the $p$-adic setting is largely unexplored.  We investigate conditions under which $R(A)$ is dense in the $p$-adic numbers.  Techniques from elementary, algebraic, and analytic number theory are employed in this endeavor.  We also pose many open questions that should be of general interest.
\end{abstract}

\subjclass[2010]{Primary 11B05; Secondary 11A07, 11B39}

\keywords{quotient set; $p$-adic numbers; density; Fibonacci numbers; sum of powers; geometric progressions}

\maketitle
\markleft{S.~R. GARCIA et. al.}

\section{Introduction}
For $A \subseteq \N = \{1,2,\ldots\}$ let
$R(A) = \{a/a' : a,a' \in A\}$
denote the corresponding \emph{ratio set} (or \emph{quotient set}).  
The question of when $R(A)$ is dense in the positive
real numbers $\R_+$ has been examined by many authors over the years
\cite{BT, QSDE, Hedman, Hobby, Nowicki, Misik, MR2505807, Pomerance, Micholson, Starni, 4QSG, Erdos, BST, Salat, SalatC, ST, STC}.  Analogues in the Gaussian integers \cite{QGP}
and, more generally, in imaginary quadratic number fields \cite{Sittinger} have been considered.

Since $R(A)$ is a subset of the rational numbers $\Q$, there are other important
metrics that can be considered.  
Fix a prime number $p$ and observe that each nonzero rational number has a unique representation of the form
$r = \pm p^k a/b$, in which $k \in \Z$, $a,b\in \N$ and $(a,p) = (b,p) = (a,b) =1$.  
The \emph{$p$-adic valuation} of such an $r$ is $\nu_p(r) = k$ and
its \emph{$p$-adic absolute value} is 
$\norm{r}_p = p^{-k}$.  By convention, $\nu_p(0) = \infty$ and $\norm{0}_p = 0$.  
The \emph{$p$-adic metric} on $\Q$ is $d(x,y) = \norm{x-y}_p$.
The field $\Q_p$ of \emph{$p$-adic numbers} is the completion 
of $\Q$ with respect to the $p$-adic metric.  
Further information can be found in \cite{Gouvea,Koblitz}.	

Garcia and Luca~\cite{QFN} recently proved that the set of quotients of Fibonacci numbers is dense in $\Q_p$ for all $p$.
Their result has been extended by Sanna~\cite{San17}, who proved that the quotient set of the $k$-generalized Fibonacci numbers is dense in $\Q_p$, for all integers $k \geq 2$ and primes $p$.
Other than these isolated results, the study of quotient sets in the $p$-adic setting 
appears largely neglected.
We seek here to initiate the general study
of $p$-adic quotient sets.
Techniques from elementary, algebraic, and analytic
number theory are employed in this endeavor.  
We also pose many open questions that should be of general interest.

Section \ref{Section:Preliminaries} introduces several simple, but effective, lemmas.
In Section \ref{Section:Real}, we compare and contrast the $p$-adic setting with the ``real setting.''
The potential $p$-adic analogues (or lack thereof) of known results from the real setting are discussed.

Sums of powers are studied in Section \ref{Section:Powers}.  
Theorem \ref{Theorem:SOS} completely describes for which $m$ and primes $p$
the ratio set of $\{x_1^2 + \cdots + x_m^2 : x_i \geq 0\}$ is dense in $\Q_p$.  Cubes are considerably 
trickier; the somewhat surprising answer is given by Theorem \ref{Theorem:Cubes}.

In Section \ref{Section:Recurrences} we consider sets whose elements are generated by a second-order recurrence.
Theorem \ref{Theorem:Recurrence} provides an essentially complete answer for recurrences of the form
$a_{n+2} = r a_{n+1} + s a_n$ with $a_0 = 0$ and $a_1 = 1$.  Examples are given that demonstrate the sharpness of our result.

Fibonacci and Lucas numbers are considered in Corollary \ref{Corollary:Fibonacci} which recovers the main result of \cite{QFN}:
the set of quotients of Fibonacci numbers is dense in each $\Q_p$.
The situation for Lucas numbers is strikingly different: the set of quotients of Lucas numbers is dense in $\Q_p$ if and only if $p \neq 2$ and
$p$ divides a Lucas number.

In Section \ref{Section:Progression} we examine certain unions of arithmetic progressions.
For instance, the ratio set of
$A = \{ 5^j : j \geq 0\} \cup \{7^j : j \geq 0\}$  is dense in $\Q_7$ but not $\Q_5$.  This sort of
asymmetry is not unusual.
Section \ref{Section:Proof} is devoted to the proof of Theorem \ref{Theorem:Sieve}, which asserts that
there are infinitely many pairs of primes $(p,q)$ such that $p$ is not a primitive root modulo 
$q$ while $q$ is a primitive root modulo $p^2$.  The proof is somewhat technical and
involves a sieve lemma due to Heath-Brown, along with
a little heavy machinery in the form of the Brun-Titchmarsh and Bombieri-Vinogradov theorems.
The upshot of Theorem \ref{Theorem:Sieve} is that
there are infinitely many pairs of primes $(p,q)$ so that the ratio set of
$\{ p^j : j \geq 0\} \cup \{q^k : k \geq 0\}$ is dense in $\Q_p$ but not in $\Q_q$.
A number of related questions are posed at the end of Section \ref{Section:Progression}.

\section{Preliminaries}\label{Section:Preliminaries}

We collect here a few preliminary observations and lemmas that will be fruitful in what follows.
Although these observations are all 
elementary, we state them here explicitly as lemmas since we will 
refer to them frequently.

\begin{lemma}\label{Lemma:Norm}
If $S$ is dense in $\Q_p$, then for each finite value of the $p$-adic valuation,
there is an element of $S$ with that valuation.
\end{lemma}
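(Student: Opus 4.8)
The plan is to unwind the definitions and produce, for a prescribed valuation $k \in \Z$, an element of $S$ whose $p$-adic valuation equals $k$. The key observation is that the valuation is \emph{locally constant} away from $0$: if $x \in \Q_p$ with $\nu_p(x) = k < \infty$, then $\norm{x}_p = p^{-k}$, and any $y$ with $\norm{x - y}_p < p^{-k}$ must satisfy $\norm{y}_p = \norm{x}_p$ by the ultrametric inequality (the isosceles-triangle principle), hence $\nu_p(y) = k$ as well. So the valuation function is constant on the open ball of radius $p^{-k}$ about $x$.

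First I would fix $k$. Since we only need \emph{some} element of $S$ realizing the valuation $k$, and $S \subseteq \Q$ (being a ratio set of natural numbers, though in fact density of $S$ in $\Q_p$ is all we use), it suffices to exhibit \emph{any} point $x \in \Q_p$ with $\nu_p(x) = k$ and then invoke density. The obvious choice is $x = p^k$, for which $\nu_p(p^k) = k$ and $\norm{p^k}_p = p^{-k}$.

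Next I would apply the density hypothesis: there exists $s \in S$ with $\norm{s - p^k}_p < p^{-k}$. By the ultrametric estimate above, $\norm{s}_p = \norm{p^k}_p = p^{-k}$, so $\nu_p(s) = k$, which is exactly the required element of $S$. Since $k$ was an arbitrary finite value of the valuation, this proves the lemma.

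The only subtlety — hardly an obstacle — is making sure the strict inequality $\norm{s - p^k}_p < p^{-k}$ is genuinely attainable; this is immediate because the $p$-adic metric topology has a neighborhood basis at $p^k$ consisting of the balls $\{y : \norm{y - p^k}_p < p^{-k+j}\}$ for $j \le 0$ (or simply $\{y : \norm{y - p^k}_p \le p^{-k-1}\}$), all of which are nonempty open sets, so density of $S$ forces $S$ to meet them. One should also note the harmless degenerate point: if $k \le 0$ one could even avoid $p$-adic fractions by using $x = p^k$ directly in $\Q_p$; no case distinction is really needed since the argument is uniform in $k \in \Z$.
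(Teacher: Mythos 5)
Your proof is correct and takes essentially the same route as the paper's: both rest on the local constancy of the $p$-adic valuation near a nonzero point. The only cosmetic difference is that you invoke the strict ultrametric (isosceles-triangle) equality directly at the explicit target $p^k$, whereas the paper approximates an arbitrary $q \in \Q_p^{\times}$ and uses the reverse triangle inequality $|\norm{s}_p - \norm{q}_p| \leq \norm{s-q}_p$ together with the discreteness of the value group.
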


\begin{proof}
If $q \in \Q_p^{\times}$ can be
arbitrarily well approximated with elements of $S$, then there is a sequence $s_n \in S$
so that $|p^{-\nu_p(s_n)} - p^{-\nu_p(q)}| 
= | \norm{s_n}_p - \norm{q}_p | \leq \norm{s_n - q}_p \to 0$.
On $\Q^{\times}$, the $p$-adic valuation assumes only integral values, so
$\nu_p(s_n)$ eventually equals $\nu_p(q)$.
\end{proof}

The converse of the preceding lemma is false as 
$S = \{p^k : k \in \Z\}$ demonstrates.  More generally, we have the following lemma.

\begin{lemma}\label{Lemma:Geometric}
If $A$ is a geometric progression in $\Z$,
then $R(A)$ is not dense in any $\Q_p$.
\end{lemma}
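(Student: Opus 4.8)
A geometric progression in $\Z$ has the form $A = \{c r^j : j \geq 0\}$ for some $c \in \Z$ and ratio $r \in \Z$ (we may assume $r \neq 0$; the cases $|r| \leq 1$ are degenerate and handled separately, since then $A$ is finite or eventually constant and $R(A)$ is obviously not dense). The key observation is that every element of $R(A)$ has the shape $r^k$ for some $k \in \Z$, since the constant $c$ cancels in the ratio. So it suffices to show $\{r^k : k \in \Z\}$ is not dense in $\Q_p$, and indeed this is exactly the example flagged immediately before the lemma (with $r = p$); the lemma just asserts it for arbitrary $r$.

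**Main steps.** First I would reduce, as above, to showing $\{r^k : k \in \Z\}$ is not dense in $\Q_p$ for any fixed integer $r$ with $|r| \geq 2$. Second, I would split into two cases according to $\nu_p(r)$. If $\nu_p(r) = 0$, i.e. $p \nmid r$, then every power $r^k$ has $p$-adic valuation $0$, hence lies in the unit sphere $\{x : \norm{x}_p = 1\}$; but this sphere is not all of $\Q_p$ (for instance no element approximates $0$, or $p$), so by a valuation argument — or directly by Lemma \ref{Lemma:Norm}, since the valuation $1$ is never attained — the set is not dense. If $\nu_p(r) = v \neq 0$, then $\nu_p(r^k) = vk$ runs only through the multiples of $v$; since $v \neq 0$ and $|v| \geq 1$, there is a finite valuation (e.g. any integer not divisible by $v$, or simply a valuation strictly between consecutive multiples of $v$ when $|v| \geq 2$, and when $|v|=1$ one still only gets... wait) — more carefully: if $|v| = 1$ then all integer valuations are attained, so this sub-argument needs care. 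Let me instead argue uniformly: regardless of $v$, the set $\{r^k : k \in \Z\}$ is the image of $\Z$ under $k \mapsto r^k$, and I claim it omits an entire ball. Concretely, pick any integer $k_0$ and note $r^{k_0}(1 + p)$ cannot be approximated: if $\norm{r^k - r^{k_0}(1+p)}_p$ were small, then dividing by $\norm{r^{k_0}}_p$ (a fixed nonzero constant) gives $\norm{r^{k-k_0} - (1+p)}_p$ small, so $r^m \equiv 1 + p \pmod{p^2}$ for $m = k - k_0$ with $\norm{r^m}_p$ forced to $1$; but then $m$ ranges over a set where $r^m \bmod p^2$ takes finitely many values (the powers of $r$ in the unit group $(\Z/p^2\Z)^\times$ or, if $p \mid r$, this case doesn't arise). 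So the cleanest split is still $p \mid r$ versus $p \nmid r$: when $p \nmid r$, the multiplicative order of $r$ modulo $p^2$ is finite, so $\{r^k \bmod p^2 : k \in \Z\}$ is a finite subset of $\Z/p^2\Z$, and $R(A)$ therefore cannot approximate any rational whose reduction mod $p^2$ avoids that set; when $p \mid r$, every nonzero element of $R(A)$ has valuation a nonzero multiple of $\nu_p(r)$, and in particular valuation $0$ is attained only by $r^0 = 1$, so $R(A)$ cannot approximate, say, $2$ if $p \neq 2$, or $3$ if $p = 2$ — something of valuation $0$ that is not a limit of the single point $1$ — hence not dense (again Lemma \ref{Lemma:Norm} finishes it: valuation $0$ is attained by only one point, so one cannot approximate a second valuation-$0$ element).

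**Anticipated obstacle.** The only subtlety is bookkeeping across the cases — making sure the degenerate ratios ($r = 0, \pm 1$) and the two genuine cases ($p \mid r$, $p \nmid r$) are each dispatched, and that the constant $c$ is handled cleanly by the cancellation observation. None of this is deep: once reduced to $\{r^k : k \in \Z\}$, the finiteness of $\{r^k \bmod p^2\}$ (for $p \nmid r$) or the constrained valuation (for $p \mid r$) immediately contradicts density, and I expect the proof to be just a few lines invoking Lemma \ref{Lemma:Norm} together with this reduction.
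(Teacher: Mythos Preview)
Your approach is correct and matches the paper's: reduce to $R(A) = \{r^k : k \in \Z\}$, handle $p \nmid r$ via Lemma~\ref{Lemma:Norm}, and for $p \mid r$ observe that $r^0 = 1$ is the \emph{only} element of valuation~$0$, so any other unit is unapproachable. The paper takes $-1$ as that target, which works uniformly for every prime and avoids your split on $p=2$; note also that your final citation of Lemma~\ref{Lemma:Norm} is slightly misplaced --- that lemma is about whether each valuation is \emph{attained}, whereas what you actually invoke in the $p\mid r$ case is the ultrametric fact that only valuation-$0$ elements can approximate a valuation-$0$ target.
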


\begin{proof}
If $A = \{ cr^n : n \geq 0\}$, in which $c$ and $r$ are nonzero integers, then
$R(A) = \{ r^n : n \in \Z\}$.
Let $p$ be a prime.
If $p \nmid r$, then $R(A)$ is not dense in $\Q_p$ by Lemma \ref{Lemma:Norm}.
If $p \mid r$, then $r^k \equiv -1 \pmod{p^2}$ is impossible 
since $-1$ is a unit modulo $p$.  Thus, $R(A)$ is bounded away from $-1$ in $\Q_p$.
\end{proof}

To simplify our arguments, we frequently appeal to the 
``transitivity of density.''  That is, if $X$ is dense in $Y$ and $Y$ is dense in $Z$, 
then $X$ is dense in $Z$.  This observation is used in conjunction with the following lemma.

\begin{lemma}\label{Lemma:Dense} 
Let $A \subseteq \N$.
\begin{enumerate}
\item If $A$ is $p$-adically dense in $\N$, then $R(A)$ is dense in $\Q_p$.
\item If $R(A)$ is $p$-adically dense in $\N$, then $R(A)$ is dense in $\Q_p$.
\end{enumerate}
\end{lemma}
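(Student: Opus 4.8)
The plan is to prove both parts by reducing to density of the ambient set $\N$ (or its closure) in $\Q_p$, together with the fact that $\N$ is dense in $\Z_p$ and that the ``transitivity of density'' lets us chain these facts together. Recall that $\N$ is $p$-adically dense in $\Z_p$ (every $p$-adic integer is a limit of ordinary positive integers via its partial sums), and that multiplication and inversion are continuous on $\Q_p^\times$. The key observation is that $R(A)$ always contains $1 = a/a$, and more usefully contains ratios that can be made close to any target by approximating numerator and denominator separately.

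For part (a): suppose $A$ is $p$-adically dense in $\N$. Since $\N$ is dense in $\Z_p$, transitivity gives that $A$ is dense in $\Z_p$. Now fix a target $q \in \Q_p$ and $\varepsilon > 0$; I want $a/a' \in R(A)$ within $\varepsilon$ of $q$. Write $q = p^k u$ with $u \in \Z_p^\times$ (the case $q = 0$ is handled by noting $R(A)$ accumulates at $0$ whenever $A$ contains elements of arbitrarily large valuation, which follows since $A$ is dense in $\Z_p$ — or simply approximate $q=0$ as a limit of nonzero targets). Choose $a' \in A$ with $a'$ close to some fixed unit, say $\norm{a' - 1}_p$ small; then choose $a \in A$ with $\norm{a - q a'}_p$ small, which is possible because $q a' \in \Q_p$ lies in the closure of $A$ once we know $A$ is dense in $\Z_p$ — here one must be slightly careful when $k < 0$, so instead pick $a'$ close to $1$ and $a$ close to a suitable $p$-adic integer multiple, scaling so that $a/a'$ has the right valuation. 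A cleaner route: it suffices to show $R(A)$ is dense in $\Q$ in the $p$-adic metric, and given $r = m/n \in \Q$ with $m,n \in \N$, approximate $m$ by $a \in A$ and $n$ by $a' \in A$; then $a/a' \to m/n$ $p$-adically by continuity of division away from $0$, provided we keep $\norm{a'}_p$ bounded below, which we can arrange since $n \neq 0$ fixes $\nu_p(a')$ eventually.

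For part (b): suppose $R(A)$ is $p$-adically dense in $\N$. Since $\N$ is dense in $\Z_p$, transitivity gives that $R(A)$ is dense in $\Z_p$. It remains to upgrade density in $\Z_p$ to density in all of $\Q_p$. For this, use that $R(A)$ is closed under inversion: if $a/a' \in R(A)$ then $a'/a \in R(A)$. Given a target $q \in \Q_p^\times$ with $\nu_p(q) = k < 0$, approximate $q^{-1} \in \Z_p$ (which has $\nu_p(q^{-1}) = -k > 0$, hence lies in $\Z_p$) by elements $r_n \in R(A)$; then $r_n^{-1} \in R(A)$ and $r_n^{-1} \to q$ by continuity of inversion on $\Q_p^\times$, using that $\norm{r_n}_p \to \norm{q^{-1}}_p = p^{k} \neq 0$ so the $r_n$ stay away from $0$. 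Together with density in $\Z_p$ and the case $q = 0$ (approximate $0$ by nonzero targets, or note $R(A)$ contains elements of arbitrarily negative valuation by the inversion trick), this yields density in $\Q_p$.

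The main obstacle is bookkeeping with valuations: continuity of division and inversion on $\Q_p^\times$ is only uniform away from $0$, so at each step one must verify that the approximating elements do not drift toward $0$ (or $\infty$) in $p$-adic absolute value. This is handled by Lemma \ref{Lemma:Norm}-type reasoning — once a nonzero $p$-adic target is fixed, any good enough approximation has the same valuation — but it has to be invoked carefully in each case, and the degenerate target $q = 0$ should be disposed of separately at the outset.
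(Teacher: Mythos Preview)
Your approach matches the paper's: both parts rely on transitivity of density, continuity of inversion on $\Q_p^\times$, and (for (b)) closure of $R(A)$ under reciprocation. The paper's proof is just a terser version of what you wrote.

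One loose end in your (a): your ``cleaner route'' only shows $R(A)$ approximates positive rationals $m/n$ with $m,n\in\N$, and you never justify why these are dense in $\Q_p$. The paper sidesteps this by first noting that $A$ $p$-adically dense in $\N$ implies $A$ $p$-adically dense in $\Z$ (since $\N$ is $p$-adically dense in $\Z$); one may then take $m\in\Z$, $n\in\Z\setminus\{0\}$ and reach all of $\Q$ directly by continuity of division. Your first attempt also nearly works: for $\nu_p(q)=k<0$, choose $a'\in A$ close to $p^{-k}\in\N$ rather than close to $1$, and $a\in A$ close to $u$; then $a/a'$ is close to $q$. Finally, in your (b) the parenthetical about $q=0$ should say ``arbitrarily large \emph{positive} valuation'' (i.e., elements $p$-adically close to $0$), not negative --- though $0\in\Z_p$ is already covered by the density you established, so the remark is superfluous anyway.
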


\begin{proof}
\noindent(a) 
If $A$ is $p$-adically dense in $\N$, it is $p$-adically dense in $\Z$.
Inversion is continuous on $\Q_p^{\times}$, so $R(A)$ is $p$-adically dense in $\Q$,
which is dense in $\Q_p$.

\noindent(b) 
Suppose that $R(A)$ is $p$-adically dense in $\N$.
Since inversion is continuous on $\Q_p^{\times}$, the result
follows from the fact that $\N$ is $p$-adically dense in $\{x \in \Q : \nu_p(x) \geq 0\}$.
\end{proof}

Although the hypothesis of (a) implies the hypothesis of (b), we have stated them
separately.  We prefer to use (a) whenever possible.  We turn to (b) when confronted
with problems that do not succumb easily to (a).
This can occur since the hypothesis of (a) is not necessary for $R(A)$ to be dense in $\Q_p$.
If $A$ is the set of even numbers, then
$R(A) = \Q$ is dense in $\Q_p$ for all $p$,
but $A$ is not $2$-adically dense in $\N$.  The following lemma
concerns more general arithmetic progressions.

    \begin{lemma}\label{Lemma:AP}
        Let $A = \{ an+b : n \geq 0\}$. 
        \begin{enumerate}
            \item If $p\nmid a$, then $R(A)$ is dense in $\Q_p$.
            \item If $p \mid a$ and $p \nmid b$, then $R(A)$ is not dense in $\Q_p$.
        \end{enumerate}
    \end{lemma}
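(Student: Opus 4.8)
The plan is to reduce both parts to the lemmas already established. For part (a), suppose $p \nmid a$. The set $A = \{an + b : n \geq 0\}$ consists of all sufficiently large integers congruent to $b$ modulo $a$. I claim $A$ is $p$-adically dense in $\N$, so that Lemma \ref{Lemma:Dense}(a) applies. To see this, fix a target $x \in \N$ and a modulus $p^k$; I must find $n \geq 0$ with $an + b \equiv x \pmod{p^k}$, i.e.\ $an \equiv x - b \pmod{p^k}$. Since $p \nmid a$, $a$ is invertible modulo $p^k$, so $n \equiv a^{-1}(x-b) \pmod{p^k}$ solves this, and we may take $n$ to be the least nonnegative residue (or any large representative). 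Hence every element of $\N$ is a $p$-adic limit of elements of $A$, so $A$ is $p$-adically dense in $\N$, and Lemma \ref{Lemma:Dense}(a) gives that $R(A)$ is dense in $\Q_p$.

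For part (b), suppose $p \mid a$ and $p \nmid b$. Every element $an + b$ of $A$ satisfies $an + b \equiv b \pmod p$, and since $p \nmid b$ this means $\nu_p(an+b) = 0$ for all $n$; that is, every element of $A$ is a $p$-adic unit. Consequently every element of $R(A) = \{(an+b)/(am+b) : n,m \geq 0\}$ is also a $p$-adic unit, so $\nu_p(r) = 0$ for all $r \in R(A)$. In particular $R(A)$ contains no element of $p$-adic valuation $1$ (e.g.\ it omits any neighborhood of $p$ itself), so by Lemma \ref{Lemma:Norm} it cannot be dense in $\Q_p$.

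I do not anticipate a genuine obstacle here: both parts are short once one invokes the preliminary lemmas. The only point requiring a little care is the solvability step in (a) — making sure that the congruence $an \equiv x - b \pmod{p^k}$ really can be solved with a \emph{nonnegative} index $n$, which is immediate since the residue class modulo $p^k$ contains infinitely many nonnegative integers. One should also note in (b) that the argument via Lemma \ref{Lemma:Norm} is cleaner than trying to show $R(A)$ is bounded away from a specific point, though either works; the valuation obstruction is the conceptual heart of the matter.
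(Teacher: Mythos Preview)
Your proof is correct and follows essentially the same approach as the paper: for (a) you solve the congruence $an \equiv x-b \pmod{p^k}$ using the invertibility of $a$ and invoke Lemma~\ref{Lemma:Dense}(a), and for (b) you observe $\nu_p(an+b)=0$ for all $n$ and invoke Lemma~\ref{Lemma:Norm}. The paper's argument is identical up to notation and brevity.
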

    
    \begin{proof}
    \noindent(a)
        Let $p \nmid a$ and let $n \in \N$ be arbitrary.  If $r \geq 1$, let
        $i \equiv a^{-1}(n - b) \pmod{p^r}$ so that
        $ai+b \equiv n \pmod{p^r}$.  Then $A$ is $p$-adically dense in $\N$, 
        so $R(A)$ is dense in $\Q_p$ by Lemma \ref{Lemma:Dense}.

        \noindent(b)
        If $p \mid a$ and $p \nmid b$, then
        $\nu_p(an+b) = 0$ for all $n$.
        Thus, $R(A)$ is not dense in $\Q_p$ by Lemma \ref{Lemma:Norm}.
    \end{proof}
    
\section{Real versus $p$-adic setting}\label{Section:Real}

A large amount of work has been dedicated to studying the behavior of 
quotient sets in the ``real setting.'' 
By this, we refer to work focused on determining
conditions upon $A$ which ensure that $R(A)$ is dense in $\R_+$.  It is
therefore appropriate to begin our investigations by examining the extent to which
known results in the real setting remain valid in the $p$-adic setting.

\subsection{Independence from the real case}
The behavior of a quotient set in the $p$-adic setting is essentially
independent from its behavior in the real setting.  To be more specific,
a concrete example exists
for each of the four statements of the form
``$R(A)$ is (dense/not dense) in every $\Q_p$ and 
    (dense/not dense) dense  in $\R_+$.''

    \begin{enumerate}
    \item Let $A = \N$.  Then $R(A)$ is dense in every $\Q_p$ and dense in $\R_+$.
    
    \item Let $F = \{1,2,3,5,8,13,21,34,55,\ldots\}$ denote the set of Fibonacci numbers.
    Then $R(F)$ is dense in each $\Q_p$ \cite{QFN};
    see Theorem \ref{Theorem:Recurrence} for a more general result.
      On the other hand, 
    Binet's formula ensures that $R(F)$ accumulates
    only at integral powers of the Golden ratio, so $R(F)$ is not dense in $\R_+$.
    
    \item Let $A = \{2,3,5,7,11,13,17,19,\ldots\}$ denote the set of prime numbers.
    The $p$-adic valuation of a quotient of primes belong to $\{-1,0,1\}$, 
    so Lemma \ref{Lemma:Norm} ensures that $R(A)$ is not dense in any $\Q_p$.
    The density of $R(A)$ in $\R_+$ is well known consequence of the Prime Number Theorem;
    see \cite[Ex.~218]{DM}, \cite[Ex.~4.19]{FR}, \cite[Cor.~4]{QSDE},
    	\cite[Thm.~4]{Hobby}, \cite[Ex.~7, p.~107]{Pollack}, \cite[Thm.~4]{Ribenboim}, \cite[Cor.~2]{Starni}
    	(this result dates back at least to Sierpi\'nski, who attributed it to Schinzel
    	\cite{Nowicki}).
    
    \item Let $A = \{2,6,30,210,\ldots\}$ denote the set of \emph{primorials}; 
    the $n$th primorial is the product of the first $n$ prime numbers.
    The $p$-adic valuation of a quotient of primorials belongs to $\{-1,0,1\}$,
    so Lemma \ref{Lemma:Norm} ensures that $R(A)$ is not dense in any $\Q_p$.  Moreover,
    $R(A) \cap [1,\infty) \subseteq \N$, so $R(A)$ is not dense in $\R_+$.
    \end{enumerate}

\subsection{Independence across primes}
Not only is the behavior of a quotient set in the $p$-adic setting unrelated
to its behavior in the real setting,
the density of a quotient set in one $p$-adic number system is completely independent, 
in a very strong sense, from its density in another.

\begin{theorem}
For each set $P$ of prime numbers, there is an $A \subseteq \N$
so that $R(A)$ is dense in $\Q_p$ if and only if $p \in P$.  
\end{theorem}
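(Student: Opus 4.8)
The plan is to build $A$ as a disjoint union of two kinds of pieces: ``density-producing'' blocks indexed by the primes in $P$, and a single ``density-killing'' skeleton that simultaneously spoils density in every $\Q_q$ with $q \notin P$. For a prime $p \in P$, I would include in $A$ the block $A_p = \{\,p^{j}(p\,m+1) : j \geq 0,\ m \geq 0\,\}$, or something similar whose ratio set already contains all of $p^{\Z}$ times everything of nonnegative valuation; more simply, any set that is $p$-adically dense in $\N$ will do by Lemma~\ref{Lemma:Dense}(a). The point is that $R(A) \supseteq R(A_p)$, so once one block forces density in $\Q_p$, adding further elements cannot destroy it. Thus the ``if'' direction — density in $\Q_p$ for each $p \in P$ — is essentially free.

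The delicate direction is the ``only if'': for every prime $q \notin P$ we must guarantee $R(A)$ is \emph{not} dense in $\Q_q$, and this must hold even though $A$ contains all the blocks $A_p$. The key idea is to control the $q$-adic valuations of elements of $A$ for all $q \notin P$ at once. Concretely, I would arrange that every element of $A$ is squarefree with respect to the primes outside $P$, or — cleaner — that every $a \in A$ has the form $a = u \cdot c$ where $u$ is a product of primes in $P$ and $c$ ranges over a fixed infinite set $C \subseteq \N$ all of whose elements are divisible by exactly the \emph{same} prime $q_0 \notin P$ to exactly the first power; then $\nu_{q_0}(a/a') \in \{-1,0,1\}$ always, so $R(A)$ is not dense in $\Q_{q_0}$ by Lemma~\ref{Lemma:Norm}. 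But we need this for \emph{all} $q \notin P$ simultaneously, which a single $q_0$ cannot achieve. The honest fix is to make the valuation at every $q \notin P$ \emph{zero} on all of $A$: choose the blocks so that no element of $A$ is divisible by any prime outside $P$. That is, let $A \subseteq \langle P \rangle := \{\,n \in \N : \text{every prime factor of } n \text{ lies in } P\,\}$. Then for $q \notin P$ every element of $A$, hence every element of $R(A)$, is a $q$-adic unit, and $R(A)$ is not dense in $\Q_q$ by Lemma~\ref{Lemma:Norm}.

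So the construction reduces to: for each $p \in P$, find $A_p \subseteq \langle P\rangle$ that is $p$-adically dense in $\N$, and set $A = \bigcup_{p \in P} A_p$. The remaining obstacle — and I expect this to be the real content — is that if $P = \{p\}$ is a single prime, then $\langle P \rangle = \{p^j : j \geq 0\}$ is a geometric progression, and by Lemma~\ref{Lemma:Geometric} its ratio set is \emph{never} dense in any $\Q_p$. So the naive ``stay inside $\langle P\rangle$'' plan fails exactly when $P$ is finite (or small). The resolution is to relax the support condition away from the finitely many primes we are trying to kill only when there are finitely many of them: write $\N \setminus (\{$primes in $P\}) $ — if $P$ is cofinite or infinite this is more subtle — hmm. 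The clean statement: if $P \neq \emptyset$ and $P^c = \{q : q \notin P\}$, we want $A$ with elements supported (multiplicatively) on $P$ \emph{and} on the ``primes at infinity'' — but there are none. The actual fix is: allow $A_p$ to be $p$-adically dense in $\N$ while keeping $\nu_q(a)$ uniformly bounded for each fixed $q \notin P$; for instance take $A_p = \{\,p^j n : j \geq 0,\ n \in N_p\,\}$ where $N_p$ is a residue-class skeleton mod powers of $p$ built from integers each of which is a single squarefree unit away from the $P$-part. I would handle the edge case $|P^c| < \infty$ separately, and for the generic case $|P^c| = \infty$ use that $\langle P\rangle$ contains products of arbitrarily many distinct primes of $P$ and is $p$-adically dense in $\N$ for each $p \in P$ (since $p^j$ times a unit mod $p^j$ can be realized inside $\langle P\rangle$ by Dirichlet/CRT when $P$ is infinite enough) — establishing that $p$-adic density of $\langle P\rangle \cap (1 + p^r\Z)$-type sets is the crux of the argument, and that is the step I expect to be the main obstacle.
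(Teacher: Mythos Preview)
Your proposal is not yet a proof: it is a sequence of attempts, each of which you correctly diagnose as failing, ending with an acknowledgement that the crucial step is still open. The framework is right --- keep $\nu_q(a)$ bounded for $q\notin P$ so that Lemma~\ref{Lemma:Norm} kills density in $\Q_q$, while making $A$ $p$-adically dense in $\N$ for each $p\in P$ --- and you even say ``keep $\nu_q(a)$ uniformly bounded for each fixed $q\notin P$'' at one point. But you never commit to a specific bound, and you never identify the mechanism that produces elements with controlled valuation at \emph{all} $q\notin P$ simultaneously while hitting a prescribed residue class mod $p^\ell$. Your $\langle P\rangle$ attempt (valuation exactly $0$ outside $P$) is too rigid, as you noticed; it fails not only for $|P|=1$ but already for $P=\{2,3\}$, since powers of $3$ miss half the units in $\Z/2^r\Z$.

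The missing ingredient is Dirichlet's theorem. Take $A=\{a\in\N:\nu_q(a)\leq 1\text{ for all }q\notin P\}$. The valuation bound immediately gives non-density in $\Q_q$ for $q\notin P$ via Lemma~\ref{Lemma:Norm}. For $p\in P$, given $n=p^k m$ with $p\nmid m$ and given $\ell$, Dirichlet supplies a \emph{prime} $r\equiv m\pmod{p^\ell}$; being prime, $r$ automatically has $\nu_q(r)\leq 1$ for every $q$, so $p^k r\in A$ (the factor $p^k$ is harmless since $p\in P$) and $p^k r\equiv n\pmod{p^\ell}$. Thus $A$ is $p$-adically dense in $\N$ and Lemma~\ref{Lemma:Dense} finishes. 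The single idea you were missing is that primes are the universal source of integers with valuation $\leq 1$ everywhere, and Dirichlet lets you place them in any residue class.
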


\begin{proof}
Let $P$ be a set of prime numbers, let $Q$ be the set of prime numbers
not in $P$, and let $A = \{ a \in \N : \nu_q(a) \leq 1\,\,\forall q \in Q\}$.
Lemma \ref{Lemma:Norm} ensures that $R(A)$ is not dense in 
$\Q_q$ for any $q \in Q$. 
Fix $p \in P$, let $\ell \geq 0$, and let $n = p^k m \in \N$ with $p \nmid m$.
Dirichlet's theorem on primes in arithmetic progressions provides a prime
of the form $r = p^{\ell} j + m$.  Then $p^k r \in A$ and
$p^k r \equiv p^k(p^{\ell}j+m) \equiv p^k m \equiv n \pmod{p^{\ell}}$.
Since $n$ was arbitrary, Lemma \ref{Lemma:Dense} ensures that $R(A)$ is dense in $\Q_p$.
\end{proof}

\subsection{Arithmetic progressions}

There exists a set $A \subseteq \N$ which contains arbitrarily long arithmetic progressions
and so that $R(A)$ is not dense in $\R_+$ \cite[Prop.~1]{4QSG}.  On the other hand, 
there exists a set $A$ that contains no arithmetic progressions of length three and 
so that $R(A)$ is dense in $\R_+$ \cite[Prop.~6]{4QSG}.
The same results hold, with different examples, in the $p$-adic setting.  

\begin{example}[Arbitrarily long arithmetic progressions]
The celebrated Green-Tao theorem asserts that the set of primes contains arbitrarily long
arithmetic progressions \cite{GreenTao}.  However, its ratio set is dense in no $\Q_p$;
see (c) in Section \ref{Section:Real}.
\end{example}

A set without long arithmetic progressions can have a quotient set that is dense in some 
$\Q_p$.  Consider the set $A = \{2^n : n \geq 0\} \cup \{3^n : n \geq 0\}$,
which contains no arithmetic progressions of length three \cite{4QSG}.
The upcoming Theorem \ref{Theorem:PrimitiveRoot} implies that $R(A)$ is dense in $\Q_3$.
One can confirm that $R(A)$ is bounded away from $5$ in $\Q_2$,
so $R(A)$ is dense in $\Q_p$ if and only if $p = 3$.  
However, we can do much better.

\begin{theorem}
There is a set $A \subseteq \N$ that contains no arithmetic progression of 
length three and which is dense in each $\Q_p$.
\end{theorem}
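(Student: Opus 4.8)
The plan is to build $A$ as a union of geometric-type blocks that is $p$-adically dense in $\N$ for every $p$ simultaneously, while remaining sparse enough to avoid three-term arithmetic progressions. By Lemma \ref{Lemma:Dense}(a), it suffices to produce $A \subseteq \N$ which is $p$-adically dense in $\N$ for all $p$ and which is \emph{progression-free} (no three distinct elements in arithmetic progression). The first observation is that $\{m! : m \geq 1\}$ is $p$-adically dense in $\Z$ for every prime $p$: given $n \in \Z$ and $r \geq 1$, pick a large factorial $M! \equiv 0 \pmod{p^r}$ and note that $n + M!$ is congruent to $n$ mod $p^r$; but $n + M!$ need not be a factorial. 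So instead I would enumerate all pairs $(n, r) \in \N \times \N$ and, for the $k$th pair $(n_k, r_k)$, want to adjoin to $A$ some integer $a_k \equiv n_k \pmod{p_k^{r_k}}$, where here it is cleaner to run the density requirement one prime at a time as well, enumerating triples $(p, n, r)$. The key tension is that each newly adjoined $a_k$ is constrained modulo a prescribed prime power, yet we also need $A$ to stay progression-free.

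The mechanism for avoiding progressions while hitting congruence classes is the standard greedy/thinning argument, but adapted to the $p$-adic requirement. I would proceed inductively: having chosen $a_1 < a_2 < \cdots < a_{k-1}$, I must choose $a_k$ in the arithmetic progression $\{ n_k + j\, p_k^{\,r_k} : j \geq 0\}$ (which by Lemma \ref{Lemma:AP}(a)-style reasoning is infinite and not a problem) such that $a_k$ together with the previous $a_i$'s creates no three-term progression. A three-term progression involving $a_k$ and two earlier terms $a_i, a_j$ forces $a_k \in \{2a_i - a_j, \ (a_i + a_j)/2\}$ — finitely many forbidden values — and a progression $a_i, a_j, a_k$ with $a_k$ largest forces $a_k = 2a_j - a_i$, again finitely many values for each pair $(i,j)$. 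Since the progression $\{ n_k + j\, p_k^{\,r_k}\}$ is infinite, we can choose $a_k$ large enough inside it to dodge all $O(k^2)$ forbidden values and also exceed $a_{k-1}$. This produces an infinite $A$; by construction it meets every prescribed class $n \pmod{p^r}$, hence is $p$-adically dense in $\N$ for every $p$, and it is progression-free by the inductive avoidance. Then Lemma \ref{Lemma:Dense}(a) gives that $R(A)$ is dense in every $\Q_p$.

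The main obstacle — and the only place a little care is needed — is making sure that adjoining $a_k$ in a prescribed residue class does not retroactively complete a progression among the \emph{earlier} terms with $a_k$ in the \emph{middle} (the case $a_i, a_k, a_j$ with $a_i < a_k < a_j$): since $a_k$ will be chosen larger than all earlier terms, this case cannot arise at the moment we choose $a_k$, but it \emph{could} arise later when some $a_m$ with $m > k$ is chosen with $a_k$ in the middle. This is handled because at step $m$ the value $a_m$ is required to avoid $2a_k - a_i$ for every earlier pair, so no future term can complete a progression through $a_k$ either. Thus the single forward-looking avoidance rule at each step — "$a_k$ avoids $2a_j - a_i$, $2a_i - a_j$, and $(a_i + a_j)/2$ for all $i < j < k$, and $a_k > a_{k-1}$" — simultaneously kills all three configurations, and its compatibility with an infinite congruence progression is immediate from cardinality. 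One should also note the trivial point that we may take the enumeration of triples $(p,n,r)$ to be surjective onto all of them, so no density requirement is omitted.
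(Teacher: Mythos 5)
Your proposal is correct and is essentially the paper's own argument: a greedy construction that, at stage $k$, picks an element in the prescribed residue class modulo a prime power, larger than all previous elements and avoiding the finitely many values that would complete a three-term progression, then invokes Lemma \ref{Lemma:Dense}(a). The only cosmetic difference is that you enumerate triples $(p,n,r)$ and spell out the forbidden-value bookkeeping, whereas the paper enumerates pairs (prime power, residue) and leaves that verification implicit.
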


\begin{proof}
    Fix an enumeration $(q_n,r_n)$ of the set of all pairs
    $(q,r)$, in which $q$ is a prime power and $0 \leq r < q$;
    observe that each of the the pairs $(q,0),(q,1),\ldots,(q,q-1)$ appears 
    exactly once in this enumeration.
    Construct $A$, initially empty, according to the following procedure.
    Include the first natural number $a_1$ for which $a_1 \equiv r_1 \pmod{q_1}$,
    then include the first $a_2$ so that $a_2 > a_1$ and $a_2 \equiv r_2 \pmod{q_2}$.
    Next, select $a_3 > a_2$ so that $a_3 \equiv r_3 \pmod{q_3}$
    and so that $a_1,a_2,a_3$ is not an arithmetic progression of length three.
    Continue in this manner, so that a natural number $a_n > a_{n-1}$ is produced
    in the $n$th stage so that
    $a_n \equiv r_n \pmod{q_n}$ and
    $a_1,a_2,\ldots,a_n$ contains no arithmetic progression of length three.
    Since $A = \{a_n : n \geq 1\}$ contains a complete
    set of residues modulo each prime power,
    it is $p$-adically dense in $\N$ for each prime $p$.
    Thus, $R(A)$ is dense in each $\Q_p$ by Lemma \ref{Lemma:Dense}.
    By construction, $A$ contains no arithmetic progression of length three.
\end{proof}

\subsection{Asymptotic density}\label{Section:Density}
For $A \subseteq \N$, define $A(x) = A \cap [1,x]$.  Then
$|A(x)|$ denotes the number of elements in 
$A$ that are at most $x$.  
The \emph{lower asymptotic density} of $A$ is 
	\begin{equation*}
		\underline{d}(A) = \liminf_{n\to\infty} \frac{ | A(n)| }{n}
	\end{equation*}
and the \emph{upper asymptotic density} of $A$ is
\begin{equation*}
	\overline{d}(A) = \limsup_{n\to\infty} \frac{ | A(n)| }{n}.
\end{equation*}
If $\underline{d}(A) = \overline{d}(A)$, then their
common value is denoted $d(A)$ and called the \emph{asymptotic density}
(or \emph{natural density}) of $A$.  In this case, 
$d(A) = \lim_{n \to \infty} |A(n)|/n$.
Clearly $0 \leq \underline{d}(A) \leq \overline{d}(A) \leq 1$.

A striking result of Strauch and T\'oth is that
if $\underline{d}(A) \geq \frac{1}{2}$,
then $R(A)$ is dense in $\R_+$ \cite{ST}; see also \cite{4QSG} for a detailed exposition.
That is, $\frac{1}{2}$ is a critical threshold in the sense that any subset of $\N$ that
contains at least half of the natural numbers has a quotient set that is dense in $\R_+$.
On the other hand, the critical threshold in the $p$-adic setting is $1$.

\begin{theorem}\label{Theorem:Alpha}\hfill
\begin{enumerate}
\item If $\overline{d}(A) = 1$, then $R(A)$ is dense in each $\Q_p$.
\item For each $\alpha \in [0,1)$, there is an $A \subseteq \N$ so that
$R(A)$ is dense in no $\Q_p$ and $\underline{d}(A) \geq \alpha$.
\end{enumerate}
\end{theorem}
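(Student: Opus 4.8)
The plan is to prove the two parts separately. For part~(a), suppose $\overline{d}(A) = 1$. The strategy is to show that $A$ is $p$-adically dense in $\N$ and then invoke Lemma~\ref{Lemma:Dense}. Fix a prime $p$, an exponent $r \geq 1$, and a target residue class $c \pmod{p^r}$; I want to find an element of $A$ congruent to $c$. If $A$ contained no element of some residue class $c \pmod{p^r}$, then $A \subseteq \N \setminus (c + p^r\N)$, so $|A(n)| \leq n - n/p^r + O(1)$, giving $\overline{d}(A) \leq 1 - p^{-r} < 1$, a contradiction. Hence $A$ meets every residue class modulo every $p^r$, so $A$ is $p$-adically dense in $\N$, and Lemma~\ref{Lemma:Dense}(a) finishes the argument. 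Actually a small subtlety: we need $A$ to be dense in $\N$ $p$-adically, i.e.\ to contain, for each $n$ and $r$, an element $\equiv n \pmod{p^r}$; the computation above shows exactly this, since the omitted class argument applies to any target residue.

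For part~(b), fix $\alpha \in [0,1)$ and choose a prime $p$ with $1 - 1/p \geq \alpha$ (possible since $1 - 1/p \to 1$; if $\alpha = 0$ take $p = 2$). The idea is to take $A = \{\, n \in \N : p \nmid n \,\}$, the integers coprime to $p$. Then $d(A) = 1 - 1/p \geq \alpha$, so in particular $\underline{d}(A) \geq \alpha$. Every element of $A$ has $p$-adic valuation $0$, hence every element of $R(A)$ has $p$-adic valuation $0$ as well; by Lemma~\ref{Lemma:Norm}, $R(A)$ is not dense in $\Q_p$. This handles the prime $p$ but not the other primes $q \neq p$, for which $R(A)$ may well be dense in $\Q_q$. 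To get non-density in \emph{every} $\Q_q$ simultaneously, I would instead intersect over finitely many such conditions: let $\alpha < 1$ be given, pick $p$ as above, and set $A = \{\, n : \nu_q(n) \le 1 \text{ for all primes } q \le N \,\} \cap \{\, n : p \nmid n\,\}$ for a suitable $N$. But this overcomplicates things; the cleanest route is $A = \{\, n \in \N : \nu_q(n) \leq 1 \text{ for all primes } q\,\}$, intersected with a density-preserving constraint. In fact the squarefree-type set $A = \{\, n : \nu_q(n) \le 1 \ \forall q\,\}$ already has $R(A)$ with all valuations in $\{-1,0,1\}$, so $R(A)$ is dense in no $\Q_p$ by Lemma~\ref{Lemma:Norm}; but its density is $6/\pi^2 \approx 0.6079$, which only handles $\alpha < 6/\pi^2$. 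To push the density up to any $\alpha < 1$, allow higher powers: let $A_k = \{\, n : \nu_q(n) \le k \ \forall q\,\}$, whose density is $1/\zeta(k+1) \to 1$ as $k \to \infty$, and whose quotient set has valuations confined to $\{-k,\dots,k\}$, again non-dense in every $\Q_p$ by Lemma~\ref{Lemma:Norm}. Choosing $k$ large enough that $1/\zeta(k+1) \geq \alpha$ completes part~(b).

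The main obstacle is the bookkeeping in part~(b): one must exhibit a single set whose density is pushed arbitrarily close to $1$ while keeping $R(A)$ non-dense in \emph{all} $\Q_p$ at once, and the naive coprimality-to-one-prime construction only kills one prime. The fix — bounding all $q$-adic valuations uniformly by a large constant $k$ — works because $R(A_k)$ then has bounded valuation set $\{-k, \ldots, k\}$ across every prime simultaneously, so Lemma~\ref{Lemma:Norm} applies uniformly, and the density $1/\zeta(k+1)$ of $A_k$ can be made as close to $1$ as desired. Part~(a) is routine once one spots that missing a residue class modulo $p^r$ forces $\overline{d}(A) \le 1 - p^{-r}$.
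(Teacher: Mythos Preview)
Your argument for part~(a) is essentially the paper's: both deduce from $\overline{d}(A)=1$ that $A$ hits every residue class modulo every $p^r$ (else $\overline{d}(A)\le 1-p^{-r}$) and then invoke Lemma~\ref{Lemma:Dense}. For part~(b), once you discard the false starts, your construction is correct but genuinely different from the paper's. You take the $(k{+}1)$-free integers $A_k=\{n:\nu_q(n)\le k\text{ for every prime }q\}$, quote the classical density $d(A_k)=1/\zeta(k+1)\to 1$, and observe that every element of $R(A_k)$ has $p$-adic valuation in $\{-k,\dots,k\}$, so Lemma~\ref{Lemma:Norm} blocks density in each $\Q_p$. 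The paper instead lets the exponent vary with the prime: it sets $A=\{a:\nu_{p_n}(a)\le r_n\text{ for all }n\}$ with $r_n$ large enough that $p_n^{-r_n}\le(1-\alpha)2^{-n}$, and bounds $\underline{d}(A)\ge 1-\sum_n p_n^{-r_n}\ge\alpha$ by a crude union-of-complements estimate. Your route is tidier if one is willing to cite the density formula for $k$-free integers; the paper's is entirely self-contained, needing nothing beyond counting multiples of a fixed modulus.
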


\begin{proof}
\noindent(a)
Suppose that $\overline{d}(A) = 1$.
If $A$ contained no representative from some congruence class modulo
a prime power $p^r$, then $\overline{d}(A) \leq 1 - 1/p^r < 1$, a contradiction.
Thus, $A$ contains a representative from every congruence class modulo 
each prime power $p^r$.  
Let $n,r \in \N$ and select $a,b \in A$ so that 
$a \equiv n \pmod{p^r}$ and $b \equiv 1 \pmod{p^r}$.  Then
$a\equiv bn \pmod{p^r}$ and hence
$\nu_p(a/b - n) = \nu_p(a - bn) \geq r$.  Thus,
$R(A)$ is dense in $\Q_p$ by Lemma \ref{Lemma:Dense}.
\medskip

\noindent(b)
Let $\alpha \in (0,1)$, 
let $p_n$ denote the $n$th prime number, and let $r_n$ be so large that
$2^n \leq (1-\alpha) p_n^{r_n}$ for $n\geq 1$.  If
\begin{equation*}
A = \{ a \in \N : \nu_{p_n}(a) \leq r_n\,\,\forall n\},
\end{equation*}
then $R(A)$ is dense in no $\Q_p$ by Lemma \ref{Lemma:Norm}.  Since $A$ omits
every multiple of $p_n^{r_n}$, 
\begin{equation*}
\underline{d}(A) 
\geq 1 - \sum_{n=1}^{\infty} \frac{1}{p_n^{r_n}}
\geq 1 - \sum_{n=1}^{\infty} \frac{(1-\alpha)}{2^n} = \alpha.\qedhere
\end{equation*}
\end{proof}

It is also known that if $\underline{d}(A) + \overline{d}(A) \geq 1$, 
then $R(A)$ is dense in $\R_+$ \cite[p.~71]{ST}.
The $p$-adic analogue is false:
for $\alpha \geq \frac{1}{2}$, the set $A$ from Theorem \ref{Theorem:Alpha}
satisfies $\underline{d}(A) + \overline{d}(A) \geq 1$ and is dense in no $\Q_p$.
On the other extreme, we have the following theorem.

\begin{theorem}
There is an $A \subseteq \N$ with $d(A) = 0$ for which 
$R(A)$ dense in every $\Q_p$.
\end{theorem}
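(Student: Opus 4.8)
The plan is to build $A$ by a greedy construction that forces $A$ to meet every residue class modulo every prime power while growing so rapidly that $d(A)=0$. This reuses the enumeration device from the arithmetic-progression-free example above. First I would fix an enumeration $(q_n,r_n)_{n\geq 1}$ of all pairs $(q,r)$ in which $q$ is a prime power and $0\leq r<q$, arranged so that for each prime power $q$ the pairs $(q,0),(q,1),\ldots,(q,q-1)$ each occur exactly once. Then I would construct $A=\{a_1<a_2<\cdots\}$ recursively: having chosen $a_1<\cdots<a_{n-1}$, pick $a_n$ to be any element of the (infinite) arithmetic progression $\{r_n+kq_n : k\geq 0\}$ with $a_n>\max\{a_{n-1},2^n\}$. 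Such an $a_n$ exists because that progression is infinite, so the growth requirement never conflicts with the congruence requirement.

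Next I would verify the two required properties. For $p$-adic density: given a prime $p$, an integer $m\geq 1$, and $r\geq 1$, the pair $(p^r,\,m\bmod p^r)$ occurs as some $(q_n,r_n)$, so the corresponding $a_n\in A$ satisfies $a_n\equiv m\pmod{p^r}$. Hence $A$ is $p$-adically dense in $\N$, and Lemma \ref{Lemma:Dense}(a) shows that $R(A)$ is dense in $\Q_p$; since $p$ is arbitrary, this holds in every $\Q_p$. For the density claim: the bound $a_n>2^n$ forces $|A(x)|\leq\log_2 x$ for all $x\geq 2$, so $|A(n)|/n\to 0$, which gives $d(A)=0$.

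I do not anticipate a real obstacle here: the construction is elementary and the verification routine. The only point deserving a moment's care is that the two competing demands on $a_n$ (lying in a prescribed congruence class versus exceeding $2^n$) are simultaneously satisfiable, which holds precisely because a nonempty arithmetic progression in $\N$ is infinite. If a more quantitative rate of decay of $|A(x)|/x$ were desired, one would simply replace $2^n$ by any prescribed rapidly increasing sequence.
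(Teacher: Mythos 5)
Your proof is correct, but your construction differs from the paper's. The paper takes whole blocks of consecutive integers (the next $q_n$ integers hit every residue class modulo $q_n$ at once) separated by gaps of length $q_n!$, and the factorial gaps force $d(A)=0$. You instead enumerate the individual pairs (prime power, residue class) and greedily select a single representative of each, subject to the growth condition $a_n > \max\{a_{n-1}, 2^n\}$ --- essentially recycling the enumeration device the paper uses for its progression-free example. Both routes reduce to the same key step, namely Lemma \ref{Lemma:Dense}(a) applied to a set that meets every residue class modulo every prime power, and both verifications are sound: your existence argument for $a_n$ (an infinite arithmetic progression always contains an element beyond any bound) and your counting bound $|A(x)|\leq \log_2 x$ are correct. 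What your version buys is an explicit and much stronger sparsity estimate, $|A(x)| = O(\log x)$ rather than merely $d(A)=0$, and a construction in which the density-zero requirement is trivially tunable; what the paper's version buys is that density in each residue class is achieved ``for free'' by including consecutive runs, with no bookkeeping over an enumeration of pairs.
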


\begin{proof}
Let $q_n$ denote the increasing sequence $2,3,4,5,7,8,9,11,13,16,17,\ldots$ 
of prime powers.  Construct $A$ according to the following procedure.
Add the first $q_1$ numbers to $A$ (that is, $1,2 \in A$) and 
skip the next $q_1!$ numbers (that is, $3,4, \notin A$).
Then add the next $q_2$ numbers to $A$ (that is, $5,6,7 \in A$)
and skip the next $q_2!$ numbers (that is $8,9,10,11,12,13 \notin A$).
The rapidly increasing sizes of the gaps between successive blocks of elements of $A$
ensures that $d(A) = 0$.  Since $A$ contains 
arbitrarily long blocks of consecutive integers, it
contains a complete set of residues modulo each $q_n$.
Thus, $A$ is $p$-adically dense in $\N$ for each prime $p$ and hence
$R(A)$ is dense in each $\Q_p$
by Lemma \ref{Lemma:Dense}.
\end{proof}

\subsection{Partitions of $\N$}
If $\N = A \sqcup B$, then at least one of $R(A)$ or $R(B)$ is dense in $\R_+$ \cite{BST};
this is sharp in the sense that there exists a partition $\N = A \sqcup B \sqcup C$ so that none of
$R(A)$, $R(B)$, and $R(C)$ is dense in $\R_+$.  See \cite{4QSG} for a detailed exposition of these results.  Things are different in the $p$-adic setting.

\begin{example}
Fix a prime $p$ and let
\begin{equation*}
A = \{ p^j n \in \N: \text{$j$ even, $(n,p)=1$}\}
\quad \text{and} \quad
B = \{ p^j n \in \N : \text{$j$ odd, $(n,p)=1$}\}.
\end{equation*}
Then $A \cap B = \varnothing$, but neither $R(A)$ nor $R(B)$ is dense in $\Q_p$ by 
Lemma \ref{Lemma:Norm}.  
\end{example}

\begin{problem}
Is there a partition $\N = A \sqcup B$ so that $A$ and $B$ are dense in no $\Q_p$?
\end{problem}

\section{Sums of powers}\label{Section:Powers}

The representation of integers as the sum of squares dates back to antiquity,
although this study only truly flowered with the work of Fermat, Lagrange, and Legendre.
Later authors studied more general quadratic forms and also representations of 
integers as sums of higher powers.  From this perspective, it is natural to consider the
following family of problems.  Let
\begin{equation*}
A = \{ a \in \N : a = x_1^n + x_2^n + \cdots + x_m^n, \,
    x_i \geq 0\}.
\end{equation*}
    For what $m$, $n$, and $p$ is $R(A)$ dense in $\Q_p$?
For squares and cubes, Theorems \ref{Theorem:SOS} and
\ref{Theorem:Cubes} provide complete answers.

\begin{theorem}\label{Theorem:SOS}
    Let $S_n = \{ a \in \N : \text{$a$ is the sum of $n$ squares, with $0$ permitted}\}$.  
    \begin{enumerate}
        \item $R(S_1)$ is not dense in any $\Q_p$.
        \item $R(S_2)$ is dense in $\Q_p$ if and only if $p \equiv 1 \pmod{4}$.
        \item $R(S_n)$ is dense in $\Q_p$ for all $p$ whenever $n \geq 3$.
    \end{enumerate}
\end{theorem}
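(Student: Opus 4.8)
Let me sketch a proof organized by the three parts.

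Part (a) is immediate: $S_1 = \{0, 1, 4, 9, \ldots\}$, so $R(S_1) = \{(a/b)^2 : a, b \in \N\}$ consists only of squares of rationals. A square in $\Q^\times$ always has even $p$-adic valuation, so by Lemma \ref{Lemma:Norm}, $R(S_1)$ is dense in no $\Q_p$.

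Part (b): The plan is to apply Lemma \ref{Lemma:Dense}(a), showing $S_2$ is $p$-adically dense in $\N$ exactly when $p \equiv 1 \pmod 4$. For the forward direction, if $p \equiv 3 \pmod 4$, then $p$ is not a sum of two squares (classical), and more relevantly I want to exhibit a congruence class mod a power of $p$ missed by $S_2$; the cleanest route is that for $p \equiv 3 \pmod 4$, the number $p$ itself cannot be approximated — indeed any $a \in S_2$ with $\nu_p(a) = 1$ would force (writing $a = x^2 + y^2$) a contradiction via the standard descent showing $\nu_p(x^2+y^2)$ is even when $p \equiv 3 \pmod 4$. So $S_2$ omits every element of valuation exactly $1$, and Lemma \ref{Lemma:Norm} finishes it. For $p = 2$: one checks $R(S_2)$ is bounded away from some point, e.g. the valuations of elements of $S_2$ avoid being congruent to certain residues — actually $2 = 1^2+1^2 \in S_2$, so valuation $1$ is attained; here the obstruction is subtler and I would instead show directly that $3$ (or another fixed $2$-adic number) cannot be approximated, since $x^2 + y^2 \pmod 8 \in \{0,1,2,4,5\}$, which constrains $a/b \pmod 8$ away from $3$. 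For the converse, when $p \equiv 1 \pmod 4$, I claim $S_2$ is $p$-adically dense in $\N$: given $n$ and $r \geq 1$, I need $x, y$ with $x^2 + y^2 \equiv n \pmod{p^r}$. Since $-1$ is a QR mod $p^r$ (as $p \equiv 1 \pmod 4$, via Hensel), the form $x^2 + y^2$ is equivalent over $\Z/p^r\Z$ to $uv$ (set $u = x + iy$, $v = x - iy$ where $i^2 \equiv -1$), which is surjective onto $\Z/p^r\Z$ — in fact even onto the units and we can hit $0$ too. So every residue is represented; density follows.

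Part (c): For $n \geq 3$ I want $R(S_n)$ dense in every $\Q_p$, and since $S_3 \subseteq S_4 \subseteq \cdots$ it suffices to treat $n = 3$, where I will use Lemma \ref{Lemma:Dense}(b) rather than (a) — the point is that $S_3$ need not contain every residue (numbers of the form $4^a(8b+7)$ are excluded), so (a) can fail, but $R(S_3)$ can still be $p$-adically dense in $\N$. The key arithmetic input is the classical three-squares theorem: $m$ is a sum of three squares iff $m \neq 4^a(8b+7)$. Given $n \in \N$ and $r \geq 1$, I want $a, b \in S_3$ with $a/b \equiv n \pmod{p^r}$, i.e. $a \equiv nb \pmod{p^r}$ with both $a,b$ sums of three squares. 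Strategy: pick $b$ to be any sum of three squares with $b \equiv 1 \pmod{p^r}$ — for instance, look for $b$ in the arithmetic progression $1 + p^r\Z$ that is not of the forbidden form $4^a(8c+7)$; such $b$ exists since the forbidden set has density $1/6 < 1$ and the progression $1 + p^r\Z$ is infinite, and one can even pin it down explicitly (e.g. for $p$ odd, $b = 1 + p^r k$ for suitable small $k$ avoiding residue $7 \bmod 8$; for $p = 2$, choose $r' \geq r$ and a suitable representative mod $2^{r'}$). Then I need $a \in S_3$ with $a \equiv n \pmod{p^r}$: again choose $a$ in the progression $n + p^r \Z$ avoiding $4^a(8c+7)$, which is possible for the same density reason, with the case $p = 2$ and $n \equiv 7 \pmod 8$ requiring a little care (push to a higher power of $2$ and select the residue appropriately, using that not all lifts of $n \bmod 2^r$ to $\bmod\ 2^{r+3}$ are of the forbidden shape). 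Then $a/b \in R(S_3)$ and $a/b \equiv n \pmod{p^r}$, so $R(S_3)$ is $p$-adically dense in $\N$, and Lemma \ref{Lemma:Dense}(b) gives density in $\Q_p$.

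**Main obstacle.** The routine parts are the density/QR computations; the one genuinely fiddly point is handling $p = 2$ throughout — in (b), showing $R(S_2)$ really is bounded away from something in $\Q_2$ (the valuations alone don't obstruct, so one must locate a forbidden residue), and in (c), navigating the $4^a(8b+7)$ exclusion when $n$ or the chosen denominator falls in a bad residue class mod a power of $2$. I expect the cleanest fix in (c) is to always work modulo a sufficiently large power $2^{r+3}$ (rather than $2^r$) so that among the eight lifts of any target residue, at least one is a sum of three squares; verifying this uniform statement about lifts is the technical heart of the argument.
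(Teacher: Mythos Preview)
Your treatment of (a) and (b) is essentially the paper's argument with cosmetic differences (you factor $x^2+y^2$ via a square root of $-1$ where the paper does an explicit Hensel lift; you work modulo $8$ at $p=2$ where the paper works modulo $4$). These are fine.

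The genuine problem is part (c) at $p=2$. Your strategy fixes $b\equiv 1\pmod{2^r}$ and then seeks $a\in S_3$ with $a\equiv n\pmod{2^r}$, and your proposed rescue is to pass to $2^{r+3}$ and pick a good lift. This does not work. If $r\geq 3$ and $n\equiv 7\pmod 8$, then \emph{every} integer $a\equiv n\pmod{2^r}$ is odd and $\equiv 7\pmod 8$, hence of the forbidden form $4^0(8j+7)$; passing to $2^{r+3}$ changes nothing, since all eight lifts are still $\equiv 7\pmod 8$. The same obstruction arises whenever $n/4^{\lfloor \nu_2(n)/2\rfloor}\equiv 7\pmod 8$. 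So with $b\equiv 1$ there is simply no admissible $a$, and your ``uniform statement about lifts'' is false.

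The fix is to stop insisting on $b\equiv 1$. Two clean routes: (i) vary $b$ over odd elements of $S_3$ (e.g.\ $b=3=1^2+1^2+1^2$), so that the target for $a$ becomes $nb$, whose odd part can be pushed off the residue $7\pmod 8$; or (ii) use the paper's observation that any natural number with \emph{odd} $2$-adic valuation lies in $S_3$ (since $4^i(8j+7)$ has even $2$-adic valuation), whence every $N=2^k m$ with $m$ odd is \emph{exactly} a quotient in $R(S_3)$: take $a/b = (2^k m)/1$ if $k$ is odd and $a/b=(2^{k+1}m)/2$ if $k$ is even. Either replaces the failed lifting argument and closes the gap.
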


\begin{proof}
\noindent(a)
Let $p$ be a prime.  Then $2 \mid \nu_p(s)$ for all $s \in S_1$ and hence
$R(S_1)$ is dense in no $\Q_p$ by Lemma \ref{Lemma:Norm}.
\medskip

\noindent(b) There are three cases: 
(b1) $p = 2$;
(b2) $p\equiv 1 \pmod{4}$;
(b3) $p \equiv 3 \pmod{4}$.
\medskip

\noindent(b1) 
Since $\nu_2(3) = 0$ any element $a/b \in R(S_2)$
that is sufficiently close in $\Q_2$ to $3$ must have $\nu_2(a) = \nu_2(b)$.  Without loss of generality,
we may assume that $a$ and $b$ are odd.  Then 
$a \equiv b \equiv 1 \pmod{4}$ since $a,b \in S_2$, so
$a \equiv 3b \pmod{4}$ is impossible.  Thus, $R(S_2)$ is bounded away from $3$ in $\Q_2$.
\medskip

\noindent(b2) Let $p \equiv 1 \pmod{4}$.
By Lemma \ref{Lemma:Dense}, it suffices to show that 
for each $n \geq 0$ and $r \geq 1$, the congruence
$x^2 + y^2 \equiv n \pmod{p^r}$ has a solution with $p \nmid x$.
We proceed by induction on $r$.
Since there are precisely $(p+1)/2$ quadratic residues modulo $p$, the sets
$\{x^2 : x \in \Z/p\Z\}$ and $\{n - y^2 : y \in \Z/p\Z\}$ have a nonempty intersection.
Thus, $x^2 + y^2 \equiv n \pmod{p}$ has a solution.
If $p \nmid n$, then $p$ cannot divide both $x$ and $y$;
in this case we may assume that $p \nmid x$.  If $p \mid n$, let $x = 1$
and $y^2 \equiv -1 \pmod{p}$; such a $y$ exists since $p \equiv 1 \pmod{4}$.
This establishes the base case $r=1$.

Suppose that $x^2 + y^2 \equiv n \pmod{p^r}$ and $p \nmid x$.
Then $x^2+y^2 = n+ m p^r$ for some $m \in \Z$.  Let
$i \equiv -2^{-1}x^{-1} m \pmod{p}$ so that $p \mid (2ix+m)$.  Then
\begin{align*}
(x+ip^r)^2+y^2 
&= x^2 + 2ix p^r + i^2 p^{2r} + y^2 \\
&\equiv n + (2ix+ m)p^r \pmod{p^{r+1}} \\
&\equiv n \pmod{p^{r+1}}.
\end{align*}
This completes the induction.
\medskip

\noindent(b3)
Let $p \equiv 3 \pmod{4}$.  
If $a,b \in S_2$, then a theorem of Fermat ensures that 
$\nu_p(a)$ and $\nu_p(b)$ are both even.
Then $\nu_p(a) - \nu_p(b) = \nu_p(a/b) \neq 1 = \nu_p(p)$ for all $a,b \in S_2$.
Thus, $S_2$ is bounded away from $p$ in $\Q_p$.
\medskip

\noindent(c)
Lagrange's four-square theorem asserts that $S_n = \N$ for $n \geq 4$, so
$R(S_n) = \Q$ is dense in $\Q_p$ for $n \geq 4$.  Thus, we consider only $n=3$.
There are three cases to consider: 
(c1) $p = 2$;
(c2) $p\equiv 1 \pmod{4}$;
(c3) $p \equiv 3 \pmod{4}$.
\medskip

\noindent(c1) Recall that Legendre's three square theorem asserts that a natural number is in $S_3$
if and only if it is not of the form $4^i(8j+7)$ for some $i,j\geq 0$.  Consequently, if the $2$-adic order
of a natural number is odd, then it is the sum of three squares.
Let $n \in \N$ be odd and let $k \in \N$.
If $k$ is odd, let $a = 2^k n$ and $b = 1$;
if $k$ is even, let $a = 2^{k+1}n$ and $b = 2$.
Then $a = 2^k n b$ and $a,b \in S_3$ since $\nu_2(a)$ is odd.
Consequently, $a \equiv 2^k n b \pmod{2^r}$ for all $r \in \N$, so 
$R(S_3)$ is $2$-adically dense in $\N$.
Lemma \ref{Lemma:Dense} ensures that $R(S_3)$ is dense in $\Q_2$.
\medskip

\noindent(c2) If $p \equiv 1 \pmod{4}$, then $R(S_3)$ contains $R(S_2)$,
which is dense in $\Q_p$ by (b2).
\medskip

\noindent(c3) Let $p \equiv 3 \pmod{4}$.
Since $4^j(8k+7)$ is congruent to either $0$, $4$, or $7$ modulo $8$, it follows that
$S_3$ contains the infinite arithmetic progression $A = \{ 8k+1 : k \geq 0\}$.
Lemma \ref{Lemma:AP} ensures that $R(A)$ is dense in $\Q_p$, so 
$R(S_3)$ is dense in $\Q_p$ too.
\end{proof}

\begin{theorem}\label{Theorem:Cubes}
Let $C_n = \{a \in \N: \text{$a$ is the sum of $n$ cubes, with $0$ permitted}\}$.
\begin{enumerate}
\item $R(C_1)$ is not dense in any $\Q_p$.
\item $R(C_2)$ is dense in $\Q_p$ if and only if $p \neq 3$.
\item $R(C_n)$ is dense in all $\Q_p$ for all $p$ whenever $n \geq 3$.
\end{enumerate}
\end{theorem}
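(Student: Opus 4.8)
The plan is to handle the three items in roughly increasing order of difficulty, exactly as in the analogous treatment of squares. Part (a) is immediate: every element of $C_1$ is a perfect cube, so $3 \mid \nu_p(s)$ for all $s \in C_1$ and hence $R(C_1)$ misses most valuations; Lemma \ref{Lemma:Norm} finishes it. Part (c) requires only a pointer to the classical Waring-type result: it is known that every sufficiently large integer is a sum of a bounded number of nonnegative cubes, and in fact $C_n$ contains every residue class modulo any prime power once $n$ is large enough; more to the point, a cheap argument shows $C_n$ eventually contains a full arithmetic progression or, better, that for $n \ge 9$ (Wieferich--Kempner) $C_n$ contains all but finitely many naturals, so $R(C_n)$ is dense in every $\Q_p$ for $n \ge 9$; the remaining cases $3 \le n \le 8$ follow by exhibiting, for each prime $p$, a suitable subset of $C_n$ (e.g. an arithmetic progression with common difference prime to $p$, using that $C_3 \supseteq C_2$ and $C_2$ already represents many residues) — I would reduce everything to the $n=3$ case and cite $C_3 \subseteq C_4 \subseteq \cdots$ together with part (b) plus a short direct argument for the primes $p = 3$.

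The heart of the theorem is part (b), and I would split it into the obstruction at $p=3$ and the density for $p \neq 3$. For the obstruction: modulo $9$, cubes lie in $\{0, \pm 1\}$, so a sum of two cubes is $\equiv 0, \pm 1, \pm 2 \pmod 9$ — in particular it is never $\equiv \pm 3 \pmod 9$ and, crucially, one shows the $3$-adic valuation of an element of $C_2$ cannot be exactly $1$ (if $3 \mid x^3 + y^3$ but $9 \nmid x^3+y^3$, one derives a contradiction by examining $x, y$ mod $3$ and lifting: $x \equiv -y \pmod 3$ forces $x^3 + y^3 \equiv 0 \pmod 9$). Hence $\nu_3$ on $C_2$ avoids $1$, so by Lemma \ref{Lemma:Norm} $R(C_2)$ is not dense in $\Q_3$. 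Alternatively, and perhaps more cleanly, show $R(C_2)$ is bounded away from $3$ in $\Q_3$ by the same valuation argument, mirroring case (b3) of Theorem \ref{Theorem:SOS}.

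For $p \neq 3$, the strategy is to invoke Lemma \ref{Lemma:Dense}(b) and prove that for every $n \ge 0$ and $r \ge 1$ the congruence $x^3 + y^3 \equiv n \pmod{p^r}$ is solvable with $p \nmid x$, by induction on $r$, using Hensel lifting exactly as in case (b2) of the squares theorem: given a solution mod $p^r$ with $p \nmid x$, perturb $x \mapsto x + i p^r$ and solve a linear congruence for $i$ (here one needs $p \nmid 3x^2$, which holds since $p \neq 3$ and $p \nmid x$). The base case $r = 1$ is where the real work lies and where I expect the main obstacle: one must show $x^3 + y^3 \equiv n \pmod p$ is solvable with $p \nmid x$ for every $n$. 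When $p \equiv 2 \pmod 3$ cubing is a bijection on $\Z/p\Z$, so this is trivial (take $y = 0$, $x = n^{1/3}$ if $p \nmid n$, and adjust if $p \mid n$). When $p \equiv 1 \pmod 3$ the cubes form an index-$3$ subgroup, and one needs a counting or character-sum argument — either the Cauchy--Davenport / Chevalley--Warning circle of ideas, or the classical fact that the number of solutions of $x^3 + y^3 = n$ over $\F_p$ is $p + O(\sqrt p)$ via Jacobi sums, which is positive for $p$ large, with the finitely many small primes $p \equiv 1 \pmod 3$ checked by hand; care is needed to also guarantee the solution can be taken with $p \nmid x$, which fails only if every representation of $n$ uses $x \equiv 0$, a case one rules out directly. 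I would present the elementary Cauchy--Davenport-style argument to keep the proof self-contained: the set of cubes mod $p$ has size $(p-1)/3 + 1 > \sqrt p$ for $p$ large, so sumset considerations give all of $\Z/p\Z$, and small primes are finite in number.
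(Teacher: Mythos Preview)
Your treatment of (a) matches the paper, and your plan for (c)---reduce to $n=3$ via $C_3\subseteq C_n$, invoke (b) for $p\neq 3$, and give a direct mod-$9$ argument for $p=3$---is close in spirit to the paper's (which instead quotes Davenport's density-one result for $C_4$ to handle $n\geq 4$, and gives a residue computation mod $9$ for $n=3$, $p=3$). Your $p=3$ obstruction in (b) is also essentially the paper's argument: both rest on the fact that sums of two cubes mod $9$ lie in $\{0,1,2,7,8\}$, so $\nu_3=1$ is never attained on $C_2$.

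The genuine gap is in your density argument for (b) when $p\neq 3$. You propose to show that $x^3+y^3\equiv n\pmod{p}$ is solvable with $p\nmid x$ for \emph{every} $n$, handling primes $p\equiv 1\pmod 3$ by a Cauchy--Davenport or Jacobi-sum count with ``finitely many small primes checked by hand.'' But the check at $p=7$ \emph{fails}: the cubes modulo $7$ are $\{0,1,6\}$, so sums of two cubes cover only $\{0,1,2,5,6\}$; the residues $3$ and $4$ are never represented. Hence $C_2$ is not $7$-adically dense in $\N$, and no Hensel-lifting scheme built on universal solvability mod $p$ can work for this prime. The paper isolates exactly this exception (citing a result of Broughan characterizing the moduli $m$ for which $x^3+y^3$ is universal mod $m$) and treats $p=7$ by a genuinely different argument via Lemma~\ref{Lemma:Dense}(b) rather than (a): one verifies that every residue class mod $7$ is a product of an element of $\{0,1,2,5,6\}$ (the representable classes) and an element of $\{1,3,4,6\}$ (the inverses of the nonzero representable classes), so that \emph{ratios} $c_1/c_2$ with $c_i\in C_2$ hit every class mod $7^r$ after a lifting step. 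Your proposal never confronts the need for this separate $p=7$ analysis, and the sumset heuristic you sketch (Cauchy--Davenport gives $|A+A|\geq\min(p,2|A|-1)=5$ here, which is sharp) cannot close the gap.
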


\begin{proof}
\noindent(a) Let $p$ be a prime.  Then $3 \mid \nu_p(c)$ 
for all $c \in C_1$, so Lemma \ref{Lemma:Norm} ensures that $R(C_1)$ is dense in no $\Q_p$.
\medskip

\noindent(b) There are three cases:  (b1) $p\neq 3,7$; (b2) $p=3$; and (b3) $p=7$.
\medskip

\noindent(b1) The congruence $x^3 + y^3 \equiv n \pmod{m}$ has a solution for each
$n$ if and only if $7 \nmid m$ or $9 \nmid m$ \cite[Thm.~3.3]{Broughan}.
Consequently, $C_2$ is $p$-adically dense in $\N$ if $p\neq 3,7$,
so $R(C_2)$ is dense in $\Q_p$ for $p \neq 3,7$ by Lemma \ref{Lemma:Dense}. 
\medskip

\noindent(b2) 
If $x/y \in R(C_2)$ is sufficiently close to $3$ in $\Q_3$, then
$\nu_3(x) = \nu_3(y) + 1$. Without loss of generality, we may suppose that
$\nu_3(x) = 1$ and $\nu_3(y) = 0$.  
A sum of two cubes modulo $9$ must be among
$0, 1, 2, 7, 8$, so $\nu_3(x) = 1$ is impossible for $x \in C_2$.
Thus, $R(C_2)$ is not dense in $\Q_3$.
\medskip

\noindent(b3) 
Let $p=7$.  For each integer $m$ congruent to $0,1,2,5$ or $6$ modulo $7$
and each $r \geq 1$,
we use induction on $r$ to show that $x^3 + y^3 \equiv m \pmod{7^r}$ has a solution
with $7 \nmid x$.  The cubes modulo $7$ are $0,1$ and $6$ and hence
each of the residue classes $0,1,2,5,6$ is a sum of two cubes
modulo $7$, at least one of which is nonzero.
This is the base of the induction.
Suppose that $n$ congruent to $0,1,2,5$ or $6$ modulo $7$ and that
$x^3 + y^3  \equiv m \pmod{7^r}$, in which $7 \nmid x$.
Then $x^3 + y^3  = m+ 7^r \ell$ for some $\ell \in \Z$.
Let $i \equiv -5\ell x^{-2} \pmod{7}$, so that $7 \mid (3ix^2 + \ell)$.
Then
\begin{align*}
(x + 7^r i )^3 + y^3  
&\equiv  x^3 + y^3  + 3x^2 \cdot 7^r i   \pmod{7^{r+1}} \\
&\equiv m + 7^r(3ix^2 + \ell)   \pmod{7^{r+1}} \\
& \equiv m \pmod{7^{r+1}}.
\end{align*}
Since $7\nmid x$, we have $7 \nmid(x+7^r i)$. This completes the induction.

The inverses of $1,2,5$ and $6$ modulo $7$ are $1,4,3$ and $6$, respectively.
Consequently, for each $m$ congruent to $1,3,4$ or $6$ modulo $7$,
the congruence $(x^3 + y^3)^{-1}\equiv m \pmod{7^r}$ has a solution with
$7 \nmid x$.  

Each residue class modulo $7$ is a product of an element in 
$\{0,1,2,5,6\}$ with an element in $\{1,3,4,6\}$.  Given a natural number $n$ and $r\geq 0$,
write $n \equiv m_1 m_2 \pmod{7^r}$, in which 
$m_1$ modulo $7$ is in $\{0,1,2,5,6\}$
and $m_2$ modulo $7$ is in  $\{1,3,4,6\}$.  Then there are $c_1,c_2 \in C_2$
so that $c_1c_2^{-1} \equiv m_1 m_2 \equiv n \pmod{7^r}$. 
Lemma \ref{Lemma:Dense} ensures that $R(C_2)$ is dense in $\Q_7$.
\medskip

\noindent(c) 
There are two cases to consider:
(c1) $n \geq 4$;
(c2) $n = 3$ and $p =3$.
\medskip

\noindent(c1) 
Almost every natural number, in the sense of natural density,
is the sum of four cubes \cite{Davenport}.  
For each prime power $p^r$ and each $n \in \N$,
the congruence $x \equiv n \pmod{p^r}$ must have a solution with $x \in C_4$ since
otherwise the natural density of $C_4$ would be at most $1 - 1/p^r$.
Lemma \ref{Lemma:Dense} ensures that $R(C_4)$ is dense in $\Q_p$.
\medskip

\noindent(c2) 
Modulo $9$, the set of cubes is $\{0,1,8\}$; modulo $9$,
the set of sums of three cubes is $\{0,1,2,3,6,7,8\}$.  
Since $4 \cdot 7 \equiv 5 \cdot 2 \equiv 1 \pmod{9}$,
each element of $\{1,4,5,8\}$ is the inverse, modulo $9$,
of a residue class that is the sum of three cubes.

A lifting argument similar to that used in the proof of (b3)
confirms that whenever $m \equiv 0,1,2,3,6,7,8 \pmod{9}$,
the congruence $x^3 + y^3 + z^3 \equiv m \pmod{3^r}$
has a solution with $3 \nmid x$ for all $r \geq 2$.  Thus,
whenever $m \equiv 1,4,5,8 \pmod{9}$, the congruence
$(x^3 + y^3 + z^3)^{-1} \equiv m \pmod{3^r}$ has a solution
with $3 \nmid x$ for all $r \geq 2$.

Each residue class modulo $9$ is the product of an element in
$\{0,1,2,3,6,7,8\}$ and an element of $\{1,4,5,8\}$.  Given
a natural number $n$ and $r \geq 2$, write
$n \equiv m_1 m_2 \pmod{3^r}$, in which 
$m_1$ modulo $9$ is in $\{0,1,2,3,6,7,8\}$
and $m_2$ modulo $9$ is in $\{1,4,5,8\}$.  Then there are
$c_1,c_2 \in C_3$ so that $c_1 c_2^{-1} \equiv m_1 m_2 \equiv n \pmod{3^r}$.
Lemma \ref{Lemma:Dense} ensures that $R(C_2)$ is dense in $\Q_3$.
\end{proof}

We close this section with a couple open problems.
In light of Theorems \ref{Theorem:SOS} and
\ref{Theorem:Cubes}, the following question is the next logical step.

\begin{problem}
What about sums of fourth powers?  Fifth powers?
\end{problem}

Turning in a different direction, instead of sums of squares one might
consider quadratic forms.  Cubic and biquadratic forms might also eventually be considered.

\begin{problem}
    Let $Q$ be a quadratic form and let $A = \{ a \in \N: \text{$Q$ represents $a$}\}$.
    For which primes $p$ is $R(A)$ dense in $\Q_p$?
\end{problem}

\section{Second-order recurrences}\label{Section:Recurrences}

Garcia and Luca \cite{QFN} showed that the set of quotients of 
Fibonacci numbers is dense in $\Q_p$ for all $p$.  The proof employed
a small amount of algebraic number theory and some relatively obscure
results about Fibonacci numbers.  The primes $p = 2$ and $p = 5$
required separate treatment.  
This result has been extended by Sanna~\cite{San17}, who proved that the quotient set of the $k$-generalized Fibonacci numbers is dense in $\Q_p$, for all integers $k \geq 2$ and all primes $p$.
The proof made use of $p$-adic analysis.
In this section, we establish a result for
certain second-order recurrences that include the Fibonacci numbers as a special case.

Given two fixed integers $r$ and $s$, let $(a_n)_{n \geq 0}$ be defined by
\begin{equation*}
a_0 = 0, \qquad a_1 = 1, \qquad a_{n+2} = r a_{n+1} + s a_n,
\end{equation*}
and let $(b_n)_{n \geq 0}$ be defined by
\begin{equation*}
b_0 = 2, \qquad b_1 = r, \qquad b_{n+2} = r b_{n+1} + s b_n .
\end{equation*}
Sequences like $(a_n)_{n \geq 0}$ and $(b_n)_{n \geq 0}$ are usually known as \emph{Lucas sequences of the first kind} and \emph{second kind}, respectively.
We assume that the \emph{characteristic polynomial} $x^2 - rx - s$ has two nonzero roots $\alpha, \beta \in \mathbb{C}$ such that $\alpha / \beta$ is not a root of unity.
In particular, $\alpha$ and $\beta$ are distinct.
Under this hypothesis, it is well known that
\begin{equation*}
a_n = \frac{\alpha^n - \beta^n}{\alpha - \beta} \quad \text{and} \quad b_n = \alpha^n + \beta^n ,
\end{equation*}
for all integers $n \geq 0$, and that $a_n, b_n \neq 0$ for all $n \geq 1$.
For each prime number $p$ such that $p \nmid s$, let $\tau(p)$ be the \emph{rank of appearance} of $p$ in $(a_n)_{n \geq 0}$, that is, the smallest positive integer $k$ such that $p \mid a_k$ (such $k$ exists, see \cite{MR3141741}).
Regarding the $p$-adic valuation of $(a_n)_{n \geq 0}$, we have the following result~\cite[Theorem~1.5]{MR3512829}.

\begin{theorem}\label{Theorem:padican}
If $p \nmid s$, then
\begin{equation*}
\nu_p(a_n) = \begin{cases}
\nu_p(n) + \nu_p(a_p) - 1 & \text{ if } p \mid \Delta,\; p \mid n, \\
0 & \text{ if } p \mid \Delta,\; p \nmid n, \\
\nu_p(n) + \nu_p(a_{p \tau(p)}) - 1 & \text{ if } p \nmid \Delta,\; \tau(p) \mid n,\; p \mid n, \\
\nu_p(a_{\tau(p)}) & \text{ if } p \nmid \Delta,\; \tau(p) \mid n,\; p \nmid n, \\
0 & \text{ if } p \nmid \Delta,\; \tau(p) \nmid n ,
\end{cases}
\end{equation*}
for all positive integers $n$, where $\Delta = r^2 + 4s$.
\end{theorem}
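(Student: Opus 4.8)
The plan is to reduce the five-case formula to two structural facts and then read off the cases by elementary $p$-adic bookkeeping. Everything rests on identities coming from Binet's formula: for $m\mid n$ the quotient $(\alpha^n-\beta^n)/(\alpha^m-\beta^m)=\sum_{j}\alpha^{mj}\beta^{m(n/m-1-j)}$ is a symmetric polynomial in $\alpha,\beta$ with integer coefficients, so $a_m\mid a_n$; moreover $a_{2m}=a_m b_m$ and $b_m^2-\Delta a_m^2=4(-s)^m$. First I would establish the \emph{law of apparition}: for $p\nmid s$ one has $p\mid a_n\iff\tau(p)\mid n$. If $p\nmid\Delta$ this is a computation in $\overline{\F}_p$, where $\alpha,\beta$ are distinct, so $p\mid a_n\iff(\alpha/\beta)^n=1$ and $\tau(p)$ is the multiplicative order of $\alpha/\beta$, which is coprime to $p$ since it divides $|\F_{p^2}^{\times}|$. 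If $p\mid\Delta$ then $p$ is odd and $p\nmid r$, and the expansion $a_n=2^{1-n}\sum_{k\ge0}\binom{n}{2k+1}r^{n-1-2k}\Delta^k$ gives $a_n\equiv 2^{1-n}nr^{n-1}\pmod p$, so $p\mid a_n\iff p\mid n$ and $\tau(p)=p$. In particular, in the two cases of the theorem where $\tau(p)\nmid n$ (which, when $p\mid\Delta$, is just $p\nmid n$) we have $p\nmid a_n$, hence $\nu_p(a_n)=0$.

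The engine for the remaining three cases is a lifting-the-exponent lemma: \emph{if $p$ is odd, $p\nmid s$ and $p\mid a_m$, then $\nu_p(a_{mk})=\nu_p(a_m)+\nu_p(k)$ for all $k\ge1$}. To prove it I would pass to the ring of integers of $\Q_p(\sqrt\Delta)$, extend $\nu_p$ to a valuation $w$ (with $w(p)=e\in\{1,2\}$ the ramification index), set $u=\alpha^m$, $v=\beta^m$, $\delta=u-v=(\alpha-\beta)a_m$, and use
\[
\frac{a_{mk}}{a_m}=\frac{u^k-v^k}{u-v}=\sum_{i=1}^{k}\binom{k}{i}\,v^{k-i}\,\delta^{\,i-1}.
\]
Here $uv=(-s)^m$ is a unit and $w(\delta)\ge w(a_m)\ge e>0$. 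If $p\nmid k$, the term $i=1$ is $kv^{k-1}$, a unit, while every later term carries a positive power of $\delta$, so $w(a_{mk}/a_m)=0$; this is the base case. For $k=p$, the term $i=1$ has valuation $w(p)=e$, the terms $2\le i\le p-1$ have valuation $\ge e+(i-1)w(\delta)>e$ since $p\mid\binom{p}{i}$, and the term $i=p$ has valuation $(p-1)w(\delta)\ge(p-1)e>e$ because $p\ge3$; hence $\nu_p(a_{mp})=\nu_p(a_m)+1$, and an induction on $\nu_p(k)$ finishes the lemma. Feeding in $m=p$ when $p\mid\Delta$, $m=\tau(p)$ when $p\nmid\Delta$ and $p\nmid n$, and $m=p\tau(p)$ when $p\nmid\Delta$ and $p\mid n$ (note $p\mid a_{p\tau(p)}$ because $\tau(p)\mid p\tau(p)$), and using $p\nmid\tau(p)$ in the unramified case to pass between $\nu_p(n)$ and $\nu_p(n/\tau(p))$ or $\nu_p(n/(p\tau(p)))$, produces exactly the three displayed expressions.

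The case $p=2$ is the main obstacle and must be treated separately, because the lifting-the-exponent identity can fail at the single step $m\mapsto 2m$: already for the Fibonacci numbers $\nu_2(F_6)=3$ while $\nu_2(F_3)+\nu_2(2)=2$. This is precisely why the theorem is phrased with $a_{p\tau(p)}$ rather than $a_{\tau(p)}$: one starts the induction at $m_0=2\tau(2)$. Using $a_{2m}=a_mb_m$, $b_m^2-\Delta a_m^2=4(-s)^m$ and $2\nmid s$, one checks that $\nu_2(b_{\tau(2)})\ge1$, that $\nu_2(a_{2\tau(2)})\ge2$, and that $\nu_2(b_m)=1$—hence $\nu_2(a_{2m})=\nu_2(a_m)+1$—as soon as $\nu_2(a_m)\ge2$; the odd-$k$ base case is unchanged. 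The same identities dispose of the subcase $p=2\mid\Delta$, where $\tau(2)=2$ and $a_2=r$. One also has to keep mild track of the ramification of $p$ in $\Q_p(\sqrt\Delta)$ when $p\mid\Delta$, but the strict inequalities above are comfortable—the hypothesis $p\mid a_m$ already forces $w(\delta)$ to be large—so no anomaly survives for odd $p$, and all five cases are accounted for.
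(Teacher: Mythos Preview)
The paper does not prove this statement; it is quoted verbatim from \cite[Theorem~1.5]{MR3512829} and used as a black box in the proof of Theorem~\ref{Theorem:Recurrence}. So there is no ``paper's own proof'' to compare against---your proposal supplies an argument the present paper deliberately omits.

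That said, your argument is correct and is essentially the classical one that the cited source also follows: establish the rank--of--apparition law $p\mid a_n\iff\tau(p)\mid n$, prove a lifting--the--exponent lemma $\nu_p(a_{mk})=\nu_p(a_m)+\nu_p(k)$ for odd $p$ via the binomial expansion of $(u^k-v^k)/(u-v)$ in the ring of integers of $\Q_p(\sqrt{\Delta})$, and then repair the failure at $p=2$ with the identities $a_{2m}=a_mb_m$ and $b_m^2-\Delta a_m^2=4(-s)^m$. One small organizational slip: you assert ``if $p\mid\Delta$ then $p$ is odd,'' which is false (take $r$ even and $s$ odd). You do return to the subcase $p=2\mid\Delta$ at the end, and there the argument is in fact cleaner than in the unramified $2$-adic case: since $\nu_2(\Delta)\geq 2$ one gets $\nu_2(b_m)=1$ for every $m$ with $2\mid a_m$, so the doubling step $\nu_2(a_{2m})=\nu_2(a_m)+1$ already holds from $m=\tau(2)=2$ onward and no ``start at $2\tau(2)$'' device is needed. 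With that clarification the five cases drop out exactly as you describe.
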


Actually, in~\cite{MR3512829} Theorem~\ref{Theorem:padican} is proved under the hypothesis that $s$ and $r$ are relatively prime, but it can be readily checked that the proof works exactly in the same way also if $s$ and $r$ are not coprime.

Our result is the following:

\begin{theorem}\label{Theorem:Recurrence}
Let $A_n = \{a_n : n \geq 1\}$ and $B_n = \{b_n : n \geq 1\}$.
\begin{enumerate}
\item If $p \mid s$ and $p \nmid r$, then $R(A)$ is not dense in $\Q_p$.\footnote{If $p \mid s$ and $p \mid r$, then anything can happen.  See the remarks after the proof.}
\item If $p\nmid s$, then $R(A)$ is dense in $\Q_p$.
\item For all odd primes $p$, we have that $R(B)$ is dense in $\Q_p$ if and only if there exists a positive integer $n$ such that $p \mid b_n$.
\end{enumerate}
\end{theorem}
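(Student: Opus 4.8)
\emph{The easy direction and a reduction.} If $p$ divides no $b_n$ with $n \ge 1$, then $\nu_p(b_i/b_j) = 0$ for all $i,j \ge 1$, so $R(B)$ is bounded away from $p$ and hence not dense in $\Q_p$ by Lemma~\ref{Lemma:Norm}. For the converse, I would suppose $p \mid b_k$ with $k$ chosen minimal, and show $R(B)$ is dense in $\Q_p$. By Lemma~\ref{Lemma:Dense}(b) it suffices to prove that $R(B)$ is $p$-adically dense in $\N$; that is, for each $n \in \N$ and $r \ge 1$ I must produce $i,j \ge 1$ with $\nu_p(b_i/b_j - n) \ge r$.

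\emph{Valuations of the $b_n$.} The first step is to determine $\nu_p(b_n)$. In the main case $p \nmid s$ one has $p \nmid \Delta$ automatically: if $p \mid \Delta$ and $p \nmid s$, then $\alpha \equiv \beta$ modulo a prime over $p$, whence $b_n \equiv 2\alpha^n \not\equiv 0$. I would then use the identities $b_{2m} = b_m^2 - 2(-s)^m$ and $b_{m+n} = b_m b_n - (-s)^n b_{m-n}$ (or, equivalently, the relation $b_n = a_{2n}/a_n$ together with Theorem~\ref{Theorem:padican}) to show that $p \nmid k$, that $\nu_p(b_n) = 0$ unless $k \mid n$ with $n/k$ odd, and that $\nu_p(b_{jk}) = \nu_p(b_k) + \nu_p(j)$ for odd $j$. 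In particular the valuations occurring among the $b_n$ form the set $\{0\} \cup \{\nu_p(b_k), \nu_p(b_k)+1, \dots\}$, whose difference set is all of $\Z$, so Lemma~\ref{Lemma:Norm} poses no obstruction. The remaining case $p \mid s$ forces $p \mid r$ (otherwise $b_n \equiv r^n \not\equiv 0 \pmod p$), and I would dispose of it by a separate, more hands-on valuation computation exploiting the ramification of $p$ in $\Q(\alpha)$: when $p$ exactly divides $s$, the polynomial $x^2 - rx - s$ is Eisenstein at $p$, so $\nu_p(\alpha) = \nu_p(\beta) = \tfrac12$, and both $\nu_p(b_n)$ and the residues of the deflated terms can be read off from the ramified expansion.

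\emph{The covering statement.} With the valuations in hand, the problem reduces to the claim that for every $s \ge 1$,
\begin{equation*}
\bigl\{\, (b_i/p^{\nu_p(b_i)})(b_j/p^{\nu_p(b_j)})^{-1} \bmod p^s \ :\ i,j \ge 1,\ \nu_p(b_i) = \nu_p(b_j) \,\bigr\} = (\Z/p^s\Z)^\times .
\end{equation*}
Granting this, for a target $n = p^v n'$ with $p \nmid n'$ the valuation description lets me pick $i, j$ with $\nu_p(b_i) - \nu_p(b_j) = v$ and then arrange the displayed deflated ratio to be $\equiv n' \pmod{p^{r-v}}$, giving $\nu_p(b_i/b_j - n) \ge r$. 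I would prove the displayed identity by induction on $s$. The deflated terms are, as functions of the index, essentially values of fixed linear recurrences, so the inductive step is a standard Hensel-type lift: replacing an index $m$ by $m + p^{s-1}\pi$, where $\pi$ is the period of the relevant sequence modulo $p$, multiplies the deflated term by a unit that runs over a full coset of $1 + p^{s-1}\Z/p^s\Z$ as the added multiple varies.

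\emph{The base case --- the main obstacle.} The hard part is the base case $s = 1$: showing the deflated ratios already surject onto $(\Z/p\Z)^\times$. This must be confronted head-on, because $b_n \bmod p$ can be very degenerate --- e.g. for $r = 3$, $s = 1$ one has $b_n \equiv 2, 0, 2, 0, \dots \pmod 3$, so the ratios of the \emph{unit} terms $b_n$ alone give only $1 \in (\Z/3\Z)^\times$. The key observation I would rely on is that the deflated terms at the smallest positive valuation level, namely $b_{(2i+1)k}/p^{\nu_p(b_k)}$ for $p \nmid 2i+1$, are tractable: writing $\alpha^k = -\beta^k(1 + \varepsilon)$ with $\nu_p(\varepsilon) = \nu_p(b_k)$ and expanding $(1+\varepsilon)^{2i+1}$, these reduce modulo $p$ to $-(2i+1)\lambda^i$ times a fixed unit, where $\lambda \equiv b_{2k}/2 \pmod p$ lies in $\F_p^\times$. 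The factor $2i+1$ then makes the corresponding ratios run over all of $\F_p^\times$ (one solves for $i$ by the Chinese remainder theorem, the moduli $p$ and $\operatorname{ord}_p(\lambda)$ being coprime), and combining this with the unit terms $b_n$ finishes the base case. Reconciling the possibly tiny image of $b_n \bmod p$ with the surjectivity required of the ratio set is the heart of the matter; the surrounding steps are routine bookkeeping with Theorem~\ref{Theorem:padican} and $p$-adic lifting.
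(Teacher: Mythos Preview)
Your approach to part~(c) is substantially more elaborate than the paper's, and the Hensel lifting step contains a concrete error.

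The paper avoids lifting entirely: rather than fixing the \emph{minimal} $k$ with $p\mid b_k$ and then climbing from $p$ to $p^2$ to $p^3,\ldots$, it chooses, for each target precision $j$, an index $k$ with $\nu_p(b_k)\geq j$. (Such $k$ is produced directly from Theorem~\ref{Theorem:padican} once one has extracted $p\nmid\Delta$, $\tau(p)\mid 2n$, $\tau(p)\nmid n$ from the hypothesis $p\mid b_n$.) Then $\alpha^{k}\equiv-\beta^{k}\pmod{p^{j}}$, and the specific choice of $k$ also forces $(\alpha^{k})^{2h}\equiv 1\pmod{p^{j}}$ for $h=2^{\nu_{2}(p-1)}$. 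For any $m$ one picks an odd $\ell$ with $\ell\equiv m\pmod{p^{j}}$ and $\ell\equiv 1\pmod{2h}$; the factorisation of $x^{\ell}+y^{\ell}$ over $x+y$ then gives
\[
\frac{b_{k\ell}}{b_{k}}=\sum_{t=0}^{\ell-1}(\alpha^{k})^{\ell-1-t}(-\beta^{k})^{t}\equiv \ell\,(\alpha^{k})^{\ell-1}\equiv \ell\equiv m\pmod{p^{j}},
\]
and Lemma~\ref{Lemma:Dense} finishes. No base case, no inductive step, and no separate treatment of $p\mid s$ is required.

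In your scheme, by contrast, the lifting step is miscalibrated. If $\pi$ is the period of $(b_n)$ modulo $p$ then (since $p$ is odd and $p\nmid\Delta$) one has $\alpha^{\pi}\equiv\beta^{\pi}\equiv 1\pmod{p}$, whence $\alpha^{p^{s-1}\pi}\equiv 1\pmod{p^{s}}$ and therefore $b_{m+cp^{s-1}\pi}\equiv b_{m}\pmod{p^{s}}$ for every $c$: the shift you propose does not move the residue at all, let alone sweep out a full coset of $1+p^{s-1}\Z/p^{s}\Z$. Replacing $p^{s-1}\pi$ by $p^{s-2}\pi$ gives
\[
b_{m+cp^{s-2}\pi}\equiv b_{m}+cp^{s-1}\bigl(\alpha^{m}\delta_{\alpha}+\beta^{m}\delta_{\beta}\bigr)\pmod{p^{s}},\qquad \alpha^{\pi}=1+p\delta_{\alpha},
\]
but then you must still verify that $\alpha^{m}\delta_{\alpha}+\beta^{m}\delta_{\beta}$ is a $p$-unit for the relevant indices $m$, a non-degeneracy statement you do not address. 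Your base-case computation (the $(2i+1)\lambda^{i}$ formula) is correct and is in fact the germ of the paper's argument: the paper simply replaces your minimal $k$ by one with $\nu_p(b_k)\geq j$ and arranges $(\alpha^k)^{\ell-1}\equiv 1\pmod{p^{j}}$, so that your base-case identity already holds modulo $p^{j}$ and the lift becomes unnecessary.
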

\begin{proof}
\noindent(a) If $p \mid s$ and $p \nmid r$, then induction confirms that $a_n \equiv r^{n-1} \pmod{p}$
so that $\nu_p(a_n) = 0$ for $n\geq 0$.  Thus, $R(A)$ is not
dense in $\Q_p$ by Lemma \ref{Lemma:Norm}.  

\medskip
\noindent(b)
Suppose that $p \nmid s$.
From Theorem~\ref{Theorem:padican} it follows quickly that for any integer $j \geq 1$ there exists an integer $m \geq 1$ such that $\nu_p(a_m) \geq j$.
Therefore,
\begin{equation*}
a_m = \frac{\alpha^m - \beta^m}{\alpha - \beta} \equiv 0 \pmod{p^j} ,
\end{equation*}
so that
\begin{equation}\label{eq:alphambetam}
\alpha^m \equiv \beta^m \pmod{p^j} ,
\end{equation}
where we write $x \equiv y \pmod t$ to mean that $(x - y) / t$ is an algebraic integer.
Setting $k = 2mp^{j-1}(p-1)$, by (\ref{eq:alphambetam}) we get
\begin{equation*}
\alpha^k \equiv \beta^k \equiv (\alpha^m \beta^m)^{p^{j-1}(p-1)} \equiv (-s)^{mp^{j-1}(p-1)} \equiv 1 \pmod{p^j} ,
\end{equation*}
since $\alpha\beta = -s$, $p \nmid s$, and thanks to the Euler's theorem.
Hence,
\begin{align*}
\frac{a_{kn}}{a_k} &= \frac{(\alpha^k)^n - (\beta^k)^n}{\alpha^k - \beta^k} \\
&= (\alpha^k)^{n-1} + (\alpha^k)^{n-2}(\beta^k) + \cdots + (\beta^k)^{n-1} \\
&\equiv n \pmod{p^j} ,
\end{align*}
for all positive integers $n$.
Since $n$ and $j$ are arbitrary, Lemma \ref{Lemma:Dense} ensures that $R(A)$ is dense in $\Q_p$.

\medskip\noindent(c)$(\Rightarrow)$ We prove the contrapositive. 
If $p\nmid b_n$ for all $n \geq 1$, then $\nu_p(b_n) = 0$
for all $n \geq 1$.  
Then $R(B)$ is not dense in $\Q_p$ by Lemma \ref{Lemma:Norm}.

\medskip\noindent(c)$(\Leftarrow)$ Suppose that $p$ is an odd prime that divides $b_n$ for some $n \geq 1$.
Since $b_n = a_{2n} / a_n$, we have
\begin{equation*}
\nu_p(b_n) = \nu_p(a_{2n}) - \nu_p(a_n).
\end{equation*}
Hence, $\nu_p(a_{2n}) > \nu_p(a_{n})$.
Keeping in mind that $p$ is odd, from Theorem~\ref{Theorem:padican} it follows that
\begin{equation}\label{eq:lotdivs}
p \nmid \Delta, \quad \tau(p) \nmid n, \quad \tau(p)  \mid  2n ,
\end{equation}
which in turn implies that
\begin{equation}\label{eq:taunu2}
\nu_2(\tau(p)) = \nu_2(n) + 1 .
\end{equation}
For an integer $j \geq 1$, put
\begin{equation*}
k = p^{j+1} \cdot \frac{p - 1}{2^{\nu_2(p - 1)}} \cdot n .
\end{equation*}
On the one hand, by (\ref{eq:taunu2}) we have $\tau(p) \nmid k$, so that Theorem~\ref{Theorem:padican} yields $\nu_p(a_k) = 0$.
On the other hand, $\tau(p) \mid 2k$ and $p \mid 2k$. Hence, by Theorem~\ref{Theorem:padican},
\begin{equation*}
\nu_p(a_{2k}) = \nu_p(2k) + \nu_p(a_{p\tau(p)}) - 1 \geq j .
\end{equation*}
Therefore,
\begin{equation*}
\nu_p(b_k) = \nu_p(a_{2k}) - \nu_p(a_k) \geq j ,
\end{equation*}
so that
\begin{equation*}
\alpha^k \equiv -\beta^k \pmod{p^j} .
\end{equation*}
Setting $h = 2^{\nu_2(p - 1)}$, we have
\begin{align*}
(\alpha^k)^{2h} &\equiv (\alpha^k)^h (\alpha^k)^h \equiv (\alpha^k)^h (-\beta^k)^h  \\
&\equiv (\alpha^k)^h (\beta^k)^h \equiv (-s)^{p^{j+1}(p-1)n} \equiv 1 \pmod{p^j} ,
\end{align*}
since $\alpha\beta=-s$, $p \nmid s$, and thanks to Euler's theorem.
Noting that $2h$ and $p^j$ are relatively prime, by the Chinese remainder theorem, for any positive integer $m$, we can pick a positive integer $\ell$ such that
\begin{equation*}
\ell \equiv m \pmod{p^j} \quad\text{and}\quad \ell \equiv 1 \pmod{2h} .
\end{equation*}
Therefore, since $\ell$ is odd,
\begin{align*}
\frac{b_{k\ell}}{b_k} &= \frac{(\alpha^{k})^\ell + (\beta^{k})^\ell}{(\alpha^{k}) + (\beta^{k})} \\
&= (\alpha^{k})^{\ell-1} + (\alpha^{k})^{\ell-2}(-\beta^{k}) + \cdots + (-\beta^{k})^{\ell-1} \\
&\equiv \ell (\alpha^{k})^{\ell - 1} \equiv m \pmod{p^j} .
\end{align*}
Since $j,n$ are arbitrary, Lemma \ref{Lemma:Dense} tell us that $R(B)$ is dense in $\Q_p$.
\end{proof}

Theorem~\ref{Theorem:Recurrence} has the following corollary, whose first part is the main result of~\cite{QFN}.

\begin{corollary}\label{Corollary:Fibonacci}
Let $(F_n)_{n \geq 0}$ be the sequence of Fibonacci numbers and let $(L_n)_{n \geq 0}$ be the sequence of Lucas numbers.
Put also $F = \{F_n : n \geq 1\}$ and $L = \{L_n : n \geq 1\}$.
\begin{enumerate}
\item $R(F)$ is dense in $\Q_p$, for all $p$.
\item $R(L)$ is dense in $\Q_p$ if and only if $p \neq 2$ and $p \mid L_n$ for some $n \geq 1$.
\end{enumerate}
\end{corollary}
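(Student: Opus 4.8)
The plan is to derive both parts directly from Theorem~\ref{Theorem:Recurrence} by specializing to $r = s = 1$, with the prime $p = 2$ in part~(b) requiring a short separate argument.

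First I would record that the Fibonacci and Lucas sequences are precisely the Lucas sequences of the first and second kind attached to $r = s = 1$; that is, $F_n = a_n$ and $L_n = b_n$ in the notation preceding Theorem~\ref{Theorem:Recurrence}, so that $F = A$ and $L = B$. The standing hypotheses hold: the characteristic polynomial $x^2 - x - 1$ has the two nonzero roots $\alpha = (1+\sqrt{5})/2$ and $\beta = (1-\sqrt{5})/2$, and since $|\alpha| \neq |\beta|$ the ratio $\alpha/\beta$ is not a root of unity. Hence Theorem~\ref{Theorem:Recurrence} applies without modification.

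For part~(a), since $s = 1$ we have $p \nmid s$ for every prime $p$, so Theorem~\ref{Theorem:Recurrence}(b) immediately yields that $R(F)$ is dense in $\Q_p$ for all $p$.

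For part~(b), Theorem~\ref{Theorem:Recurrence}(c) already gives, for every odd prime $p$, that $R(L)$ is dense in $\Q_p$ if and only if $p \mid L_n$ for some $n \geq 1$, so it only remains to treat $p = 2$. Here I would show that $\nu_2(L_n)$ is bounded. The sequence $(L_n \bmod 8)_{n \geq 0}$ is purely periodic, the recurrence being invertible modulo $8$, with period $12$; a direct computation shows it takes values only in $\{1,2,3,4,5,7\}$, so $8 \nmid L_n$ for every $n$ and therefore $\nu_2(L_n) \leq 2$ for all $n \geq 1$. Consequently $\nu_2(L_m / L_n) = \nu_2(L_m) - \nu_2(L_n) \in \{-2,-1,0,1,2\}$ for all $m, n \geq 1$, and Lemma~\ref{Lemma:Norm} shows that $R(L)$ is not dense in $\Q_2$. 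Combining this with the odd case gives the stated equivalence. No genuine obstacle arises: the only step not subsumed by Theorem~\ref{Theorem:Recurrence} is the elementary periodicity computation needed to dispose of $p = 2$.
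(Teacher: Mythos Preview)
Your proof is correct and follows essentially the same route as the paper: specialize Theorem~\ref{Theorem:Recurrence} to $r=s=1$ for both parts, and handle $p=2$ in (b) by the periodicity of $(L_n)$ modulo $8$ together with Lemma~\ref{Lemma:Norm}. Your version simply spells out a few details (the standing hypotheses on $\alpha,\beta$ and the explicit period and residue set modulo $8$) that the paper leaves implicit.
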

\begin{proof}
Pick $r = s = 1$.

\medskip\noindent(a) 
Follow immediately from Theorem~\ref{Theorem:Recurrence}(b).

\medskip\noindent(b)
For odd $p$, the claim is a consequence of Theorem~\ref{Theorem:Recurrence}(c).
For $p = 2$, it is enough to note that $(L_n)_{n \geq 0}$ is periodic modulo $8$ and $8 \nmid L_n$ for all $n \geq 1$. Hence, the claim follows from Lemma~\ref{Lemma:Norm}.
\end{proof}

The set of primes that divide some Lucas number $L_n$ has relative
density $2/3$ as a subset of the primes \cite{Lagarias}; see \cite{Ballot} for more information
about these types of results.  In particular, $R(L)$ is dense in $\Q_p$
more often than not.

Theorem~\ref{Theorem:Recurrence}(a--b) is sharp in the following sense:
if $p \mid s$ and $p \mid r$, then $R(A)$ may or may not be dense in $\Q_p$.
Consider the following examples.

\begin{example}
Let $p=3$, $r=15$, and $s = -54$; note that $p \mid s$ and $p \mid r$. 
Then $\alpha = 9$ and $\beta = 6$, so
\begin{equation*}
a_n=\frac{9^n-6^n}{9-6} = 3^{n-1}(3^n-2^n).
\end{equation*}
We claim that $R(A)$ is not dense in $\Q_3$.   
Since $\nu_3(3) = 1$ and $\nu_3( a_m / a_n) = (m-1)-(n-1) = m-n$ for $m,n \geq 0$, 
any element of $R(A)$ that is sufficiently close to $3$ in $\Q_3$
must be of the form $a_{n+1} / a_n$ for some $n \geq 1$.  However,
\begin{align*}
\nu_3\Big( \frac{a_{n+1}}{a_n} - 3 \Big)
&= \nu_3\Big(  \frac{3^n(3^{n+1} - 2^{n+1})}{3^{n-1}(3^n - 2^n)} - 3\Big) \\
&=1 +  \nu_3\Big( \frac{3^{n+1} - 2^{n+1}}{3^n - 2^n} -1\Big) \\
&= 1 + \nu_3( 3^{n+1} - 2^{n+1} - 3^n + 2^n) \\
&= 1 + \nu_3\Big( 3^n(3-1) - 2^n(2-1) \Big) \\
&= 1 + \nu_3\Big( 2 \cdot 3^n - 2^n \Big) 
= 1 + \nu_3\Big( 3^n - 2^{n-1} \Big) = 1,
\end{align*}
so $R(A)$ is bounded away from $3$ in $\Q_3$.
\end{example}

\begin{example}
Let $p = 5$, $r = 20$, and $s = -75$; note that $p \mid s$ and $p \mid r$.
Then $\alpha = 15$ and $\beta = 5$, so
\begin{equation*}
a_n = \frac{15^n - 5^n}{15-5} = 5^{n-1} \frac{3^n-1}{2}.
\end{equation*}
We claim that $R(A)$ is dense in $\Q_5$.
Let $N \in \N$ be given and write $N = 5^t N_0$, in which $5 \nmid N_0$.
Let $r \geq t$ be large and so that $r\not\equiv t-1\pmod 4$.
Since $\phi(5^{r+1}) = 4 \cdot 5^r$,
Euler's theorem permits us to write
\begin{equation}\label{eq:345k}
3^{4\cdot 5^r}-1=5^{r+1} \ell, \qquad 5\nmid \ell.
\end{equation}
Let $m \geq 1$ satisfy
\begin{equation}\label{eq:mlrN}
m \equiv \ell^{-1} N_0(3^{r-t+1}-1) \pmod{5^{r+1}};
\end{equation}
Euler's theorem ensures that $5 \nmid m$ since $4 \nmid (r-t+1)$.
If $n=4\cdot 5^r m$, then 
\begin{align*}
&(3^{n+r-t+1}-1)\frac{a_n}{a_{n+r-t+1}}  \\
&\qquad= 5^{n-1} \Big(\frac{3^n-1}{2}\Big)  \Big(\frac{2}{5^{n+r-t}} \Big)\\
&\qquad =  5^{-r+t-1}( 3^{4\cdot 5^r m}-1 )\\
&\qquad =  5^{t}\Big(\frac{3^{4\cdot 5^r m}-1}{3^{4\cdot 5^r}-1}\Big) 
\Big(\frac{3^{4\cdot 5^r}-1}{5^{r+1}}\Big)\\
&\qquad =  5^{t} \Big(\frac{(3^{4\cdot 5^r})^m-1}{3^{4\cdot 5^r}-1}\Big) \ell && (\text{by \eqref{eq:345k}})\\
&\qquad=5^{t} \big( (3^{4\cdot 5^r})^{m-1} + (3^{4\cdot 5^r})^{m-2}+\cdots + 1\big) \ell\\
&\qquad\equiv 5^t m \ell \pmod{5^{r+1}}  &&(\text{since $\phi(5^{r+1}) = 4 \cdot 5^r$})\\
&\qquad\equiv 5^t N_0(3^{r-t+1}-1) \pmod{5^{r+1}} && (\text{by \eqref{eq:mlrN}})\\
&\qquad\equiv N(3^{r-t+1}-1) \pmod{5^{r+1}} && (\text{since $N = 5^t N_0$})\\
&\qquad\equiv N(3^{n+r-t+1}-1) \pmod{5^{r+1}} && (\text{since $\phi(5^{r+1})|n$}).
\end{align*}
Since $4 \mid n$ and $4\nmid (r-t+1)$, it follows that
$5\nmid(3^{n+r-t+1}-1)$ and hence
\begin{equation*}
\nu_5\Big( \frac{a_n}{a_{n+r-t+1}} - N \Big) \geq r+1.
\end{equation*}
Thus, $R(A)$ is $5$-adically dense in $\N$, so it is dense in $\Q_5$
by Lemma \ref{Lemma:Dense}.
\end{example}

Where do the preceding two examples leave us?  
Suppose that $p \mid s$ and $p \mid r$.
Then $d = (r,s)$ is divisible by $p$.  Induction confirms that
$d^{\lfloor n/2 \rfloor}$ divides $a_n$ for $n \geq 0$.
To be more specific, 
\begin{equation*}
a_n = \sum_{k=0}^{n-1} \binom{n-1-k}{k} r^{ \max\{n-1-2k,0\}} s^{\max\{ \lfloor \frac{2k+1}{2} \rfloor, 0\}}
\end{equation*}
and hence it is difficult to precisely evaluate $\nu_p(a_n)$.  Consequently,
we are unable at this time to completely characterize when $R(A)$ is 
dense in $\Q_p$ if $p \mid s$ and $p \mid r$.  However, in most instances, ad hoc arguments
can handle these situations.  Consider the following examples.

\begin{example}
The integer sequence
\begin{equation*}
0,\, 1,\, 2,\, -1,\, -12,\, -19,\, 22,\, 139,\, 168,\, -359,\, -1558,\, -1321,\, 5148,\ldots
\end{equation*}
is generated by the recurrence 
\begin{equation*}
a_0 = 0,\qquad a_1 = 1, \qquad
a_{n+2} = 2a_{n+1} -5a_n, \quad n \geq 0.
\end{equation*}
In this case, $\alpha = 1+2i$ and $\beta = 1- 2i$ are nonreal.
Let $A = \{a_n : n \geq 1\}$ and confirm by induction that
$5 \nmid a_n$ for all $n \geq 1$.  Apply Lemma \ref{Lemma:Norm} and
Theorem \ref{Theorem:Recurrence} to see that $R(A)$ is dense in $\Q_p$
if and only if $p \neq 5$.
\end{example}

\begin{example}
Let $b$ be an integer not equal to $\pm 1$ and consider
the sequence $a_n = b^n-1$, which is generated by 
\begin{equation*}
a_0 = 0,\qquad a_1 = b-1, \qquad
a_{n+2} = (b+1)a_{n+1} -ba_n.
\end{equation*}
Apply Theorem \ref{Theorem:Recurrence} with $r = b+1$ and $s = -b$ to the set
$A = \{a_n : n \geq 1\}$ and conclude
that $R(A)$ is dense in $\Q_p$ if and only if $p \nmid b$.  For instance, the
set $\{1,3,7,15,31,\ldots\}$ is dense in $\Q_p$ if and only if $p \neq 2$.
\end{example}

\section{Unions of geometric progressions}\label{Section:Progression}

The ratio set of $A = \{ 2^n : n \in \N\} \cup \{3^n : n \in \N\}$ is dense in $\R_+$ 
\cite[Prop.~6]{4QSG}.  The argument relies upon the irrationality of $\log_2 3$
and the inhomogeneous form of Kronecker's approximation theorem \cite[Thm.~440]{Hardy}.  
We consider such sets in the $p$-adic setting, restricting our attention to prime bases.
The reader should have no difficulty
stating the appropriate generalizations if they are desired.

    An integer $g$ is called a \emph{primitive root modulo $m$} if
    $g$ is a generator of the multiplicative group $(\Z/m\Z)^{\times}$.
    Gauss proved that primitive roots exist only for the moduli
    $2$, $4$, $p^k$, and $2p^k$, in which $p$ is an odd prime
    \cite[Thm.~2.41]{Niven}.  Consequently, our arguments here tend to focus on
    odd primes.

\begin{theorem}\label{Theorem:PrimitiveRoot}
Let $p$ be an odd prime, let $b$ be a nonzero integer, and let
$$A = \{ p^j : j \geq 0\} \cup \{b^j : j \geq 0\}.$$
Then $R(A)$ is dense in $\Q_p$ if and only if $b$ is a primitive root modulo $p^2$.
\end{theorem}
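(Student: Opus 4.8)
The plan is to prove both directions using the structural lemmas from Section~\ref{Section:Preliminaries} together with a careful analysis of the multiplicative group $(\Z/p^r\Z)^\times$. First observe that
\begin{equation*}
R(A) = \{ p^j : j \in \Z\} \cup \{ b^j : j \in \Z\} \cup \{ p^j b^k : j, k \in \Z\},
\end{equation*}
so every element of $R(A)$ has the form $p^j b^k$. For the forward direction I would prove the contrapositive: suppose $b$ is \emph{not} a primitive root modulo $p^2$. If $p \mid b$, then Lemma~\ref{Lemma:Norm} finishes the argument exactly as in Lemma~\ref{Lemma:Geometric}, since every element of $R(A)$ close to $-1$ would have to be a unit of the form $b^k$ with $p \mid b$, which is impossible. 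If $p \nmid b$, then $b$ generates a proper subgroup $H \lneq (\Z/p^2\Z)^\times$. Pick a unit $u$ not congruent mod $p^2$ to any power of $b$; then no element $p^j b^k$ with $j = 0$ can be within $p^{-2}$ of $u$ in $\Q_p$, and elements with $j \neq 0$ have the wrong valuation. Hence $R(A)$ is bounded away from $u$ and is not dense in $\Q_p$.

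For the converse, assume $b$ is a primitive root modulo $p^2$. The key classical fact (see \cite[Thm.~2.40]{Niven}) is that a primitive root modulo $p^2$ is automatically a primitive root modulo $p^r$ for every $r \geq 1$; so $b$ generates $(\Z/p^r\Z)^\times$ for all $r$, and in particular $p \nmid b$. I would then verify the hypothesis of Lemma~\ref{Lemma:Dense}(b): given a target $N \in \N$ and precision $r \geq 1$, write $N = p^t N_0$ with $p \nmid N_0$. Since $b$ generates $(\Z/p^r\Z)^\times$, choose $k \in \Z$ with $b^k \equiv N_0 \pmod{p^r}$; then $p^t b^k \in R(A)$ and $p^t b^k \equiv N \pmod{p^{r+t}}$, so this element of $R(A)$ approximates $N$ to precision $p^{-r}$. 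Thus $R(A)$ is $p$-adically dense in $\N$, and Lemma~\ref{Lemma:Dense}(b) gives density in $\Q_p$.

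The only genuine subtlety is the lifting fact that a primitive root mod $p^2$ is a primitive root mod $p^r$ for all $r$ — this is where the hypothesis "mod $p^2$" rather than "mod $p$" is essential, and it is the sole reason the theorem restricts to $p^2$. (Recall that $b$ being a primitive root mod $p$ does \emph{not} suffice: one needs $b^{p-1} \not\equiv 1 \pmod{p^2}$.) I would cite this from a standard text rather than reprove it. A secondary point to handle carefully in the forward direction is bookkeeping the valuations: when $j \neq 0$ the element $p^j b^k$ has valuation $j \neq 0$, so it cannot approximate the unit $u$; this uses Lemma~\ref{Lemma:Norm} implicitly. Everything else is routine once these two observations are in place; no heavy machinery is needed, in contrast to the later Theorem~\ref{Theorem:Sieve}.
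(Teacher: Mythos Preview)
Your approach is essentially the paper's: for the converse you invoke the same lifting fact \cite[Thm.~2.40]{Niven} and approximate $N=p^tN_0$ by $p^t/b^{j}$, and for the forward direction you use that the valuation-zero elements of $R(A)$ are the powers of $b$, so a unit outside $\langle b\rangle\subset(\Z/p^2\Z)^\times$ cannot be approached. (The paper does this in two stages, first modulo $p$ and then producing the explicit witness $p+1$ modulo $p^2$; your one-step version with an abstract $u\notin\langle b\rangle$ is slightly cleaner.) One small point: when you write ``$p^tb^k\in R(A)$'' you should note that $k$ may be taken $\le 0$ so that this is literally the quotient $p^t/b^{-k}$; the paper makes the same move by solving $b^{j}m\equiv 1\pmod{p^r}$ with $j\ge 0$.

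The genuine gap is in your case $p\mid b$. You claim that a valuation-zero element of $R(A)$ ``would have to be a unit of the form $b^k$,'' but this ignores cross-quotients: writing $b=p^ec$ with $p\nmid c$, the ratios $b^k/p^{ek}=c^k$ and $p^{ek}/b^k=c^{-k}$ lie in $R(A)$ for every $k\ge0$, so the units in $R(A)$ are $\{c^n:n\in\Z\}$, not just $\{1\}$. Worse, this case cannot be repaired, because the stated implication actually fails when $p\mid b$: take $p=3$, $b=6$, so $c=2$ is a primitive root modulo every $3^r$. Then for each $m\in\Z$ the quotients $3^{k+m}/6^{k}=3^m\cdot 2^{-k}$ and $6^{j}/3^{j-m}=3^m\cdot 2^{j}$ exhibit enough of $\{3^m\cdot 2^n:n\in\Z\}$ inside $R(A)$ to be dense among elements of valuation $m$, so $R(A)$ is dense in $\Q_3$; yet $6$ is not a primitive root modulo $9$. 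The paper's own proof silently assumes $p\nmid b$ at the line ``Then $R(A)$ is bounded away from $m$ in $\Q_p$,'' so the defect is inherited from the statement rather than from your strategy. Under the tacit hypothesis $p\nmid b$ (which holds in every application the paper makes) your argument is correct and matches the paper's.
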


\begin{proof}
    \noindent($\Rightarrow$)
    Suppose that $R(A)$ is dense in $\Q_p$.
    We first claim that $b$ is a primitive root modulo $p$.
    If not, then there is an $m \in \{2,3,\ldots,p-1\}$
    so that $b^j \not\equiv m \pmod{p}$ for all $j \in \Z$.  
    Then $R(A)$ is bounded away from $m$ in $\Q_p$, a
    contradiction.  Thus, $b$ must be a primitive root modulo $p$.
    
    Suppose toward a contradiction that $b$ is not a primitive root modulo $p^2$.
    Since $b$ is a primitive root modulo $p$, the order of $b$ modulo $p^2$
    is at least $p-1$.  On the other hand, the order must divide $\Phi(p^2) = p(p-1)$.
    Since $p$ is prime, it follows that 
    the order of $b$ modulo $p^2$ must be exactly $p-1$.  Thus, $b^{p-1} \equiv 1 \pmod{p^2}$.
    
    If $b^n \equiv p+1 \pmod{p^2}$, then $b^n \equiv 1 \pmod{p}$ and hence
    $n$ is a multiple of $p-1$.  Then $b^n \equiv 1 \pmod{p^2}$, a contradiction.
    Thus, $R(A)$ is bounded away from $p+1$ in $\Q_p$.
    This contradiction shows that $b$ must be a primitive root modulo $p^2$.
    \medskip

    \noindent($\Leftarrow$) 
    Let $r\geq 1$ and let $n = p^k m \in \N$, in which $p \nmid m$.
    Since $b$ is a primitive root modulo $p^2$
    it is a primitive root modulo $p^3,p^4,\ldots$ \cite[Thm.~2.40]{Niven},
    so there is a $j$ such that $b^j m \equiv 1 \pmod{p^r}$.  Thus,
        \begin{equation*}
            \nu_p \Big( n - \frac{p^k}{b^j} \Big)
            = k + \nu_p \Big( m - \frac{1}{b^j} \Big)
            \geq \nu_p(b^j m - 1) \geq r,
        \end{equation*}
        so $R(A)$ is dense in $\Q_p$ by
        Lemma \ref{Lemma:Dense}.
\end{proof}

    A primitive root modulo $p$ is not necessarily a primitive root modulo $p^2$.
    For instance, $1$ is a primitive root modulo $2$, but not modulo $4$.  A less trivial example is
    furnished by $p=29$, for which $14$ is a primitive root modulo $p$, but not modulo $p^2$.
    Similarly, if $p=37$, then $18$ is a primitive root modulo $p$, but not modulo $p^2$.

\begin{example}\label{Example:ExactlyOne}
Consider $p = 5$ and $q=7$; note that  
$5$ is a primitive root modulo $7$ and vice versa.  
However, $5$ is a primitive root modulo $7^2$ but $7$ is not a primitive root modulo $5^2$.
Let $$A = \{5, 7, 25, 49, 125, 343, 625,2401, 3125,\ldots\}
= \{ 5^j : j \geq 0\} \cup \{7^j : j \geq 0\}.$$  Then
Theorem \ref{Theorem:PrimitiveRoot} ensures that $R(A)$ is dense in $\Q_7$
but not in $\Q_5$.
\end{example}

This sort of asymmetry is not unusual.  The following theorem tells us that
infinitely many such pairs of primes exist.
The proof is considerably more difficult than
the preceding material and it requires a different collection of tools
(e.g., a sieve lemma of Heath-Brown, the Brun-Titchmarsh theorem, and the
Bombieri-Vinogradov theorem).
Consequently,  the proof of Theorem \ref{Theorem:Sieve} is deferred until Section \ref{Section:Proof}.

\begin{theorem}\label{Theorem:Sieve}
There exist infinitely many pairs of primes $(p,q)$ such that $p$ is not a primitive root modulo $q$ and $q$ is a primitive root modulo $p^2$.
\end{theorem}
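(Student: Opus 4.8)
The plan is to construct the pairs $(p,q)$ by fixing a suitable congruence structure for $q$ and then counting primes $q$ satisfying that structure together with the two multiplicative conditions. The key observation is that the two requirements on the pair decouple into independent conditions on $q$ modulo $p$ (controlling whether $p$ is a primitive root mod $q$) and on $q$ modulo $p^2$ (controlling whether $q$ is a primitive root mod $p^2$). To make ``$p$ is not a primitive root modulo $q$'' easy to guarantee, I would demand that $q \equiv 1 \pmod{\ell}$ for a small fixed prime $\ell$ and that $p$ is an $\ell$-th power residue modulo $q$; the cleanest choice is to force $q \equiv 1 \pmod 4$ together with $p$ being a quadratic residue mod $q$, which by quadratic reciprocity translates into a congruence condition on $q$ modulo $p$. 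That kills primitivity of $p$ mod $q$ outright. Simultaneously I want $q$ to be a primitive root modulo $p^2$; the standard device is that $q$ lies in a single prescribed residue class modulo $p^2$, namely the class of some fixed primitive root $g$ of $(\Z/p^2\Z)^\times$.

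So the structure of the argument is: fix $p$ in a suitable infinite set of primes (chosen so that the various residue conditions are compatible — for instance $p \equiv 1 \pmod 4$ or whatever congruence makes the reciprocity bookkeeping go through), fix a primitive root $g$ modulo $p^2$, and ask for infinitely many primes $q$ with $q$ in a prescribed residue class $a \pmod{p^2}$ (ensuring $q$ is a primitive root mod $p^2$) and simultaneously $q$ in a prescribed residue class mod the auxiliary prime(s) that force $p$ to be an $\ell$-th power mod $q$. By the Chinese Remainder Theorem these amalgamate into a single arithmetic progression modulo $m := \ell p^2$ (or $4p^2$), and Dirichlet's theorem already gives infinitely many such $q$ for each fixed $p$ — hence infinitely many pairs. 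Actually to get genuinely infinitely many \emph{distinct} pairs it suffices to let $p$ itself range over an infinite set, producing a new valid $q$ (larger than the previous ones) for each new $p$; no sieve is needed in principle. However, the announced ingredients — Heath-Brown's sieve lemma, Brun–Titchmarsh, Bombieri–Vinogradov — signal that the authors want a result that survives the subtlety that $q$ being a primitive root modulo $p^2$ is \emph{not} a congruence condition but an Artin-type condition (``$q$ generates $(\Z/p^2\Z)^\times$'' means $q$ is not an $r$-th power residue mod $p^2$ for any prime $r \mid p(p-1)$). That is where the real work lies.

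Concretely, the main obstacle is handling the Artin-style primitivity condition on $q$ modulo $p^2$ uniformly. The order of $q$ modulo $p^2$ is $p(p-1)$ exactly when, for every prime $r$ dividing $p(p-1)$, $q$ is not an $r$-th power modulo $p^2$. By inclusion–exclusion over squarefree products $d$ of such primes $r$, the count of ``bad'' $q \le x$ is controlled by sums of the form $\sum_d \pm \pi(x; \text{$q$ an $r$-th power mod $p^2$ for all $r \mid d$})$, and the latter event is itself a union of congruence classes modulo $\mathrm{lcm}(p^2, \text{relevant Kummer conductors})$. The number of prime divisors of $p(p-1)$ is $O(\log p)$, so the inclusion–exclusion has $2^{O(\log p)} = p^{O(1)}$ terms — too many to control by Dirichlet-type estimates with error terms that are not uniform in the modulus. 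This is precisely what Heath-Brown's sieve lemma is designed to circumvent: it lets one bound the ``level of primitivity'' by restricting attention to small prime divisors $r$ (those below some parameter $z$) at the cost of an acceptable loss, and the large-sieve-flavored inputs (Brun–Titchmarsh for upper bounds on $\pi(x;k,a)$ uniformly in $k$, Bombieri–Vinogradov for an averaged prime-counting estimate over moduli up to $x^{1/2-\varepsilon}$) supply the uniformity needed to make the remaining sum over $d$ converge.

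So the step I expect to be genuinely hard — and which the authors defer to Section~\ref{Section:Proof} — is the quantitative estimate: show that the number of primes $q \le x$ in the fixed progression forcing $p \nmid \mathrm{ord}_q(p)$ but failing to be primitive modulo $p^2$ is $o(\pi(x))$ (or at least less than the main term), so that a positive-proportion set of $q$ remains, yielding the desired pairs. The remaining steps — translating ``$p$ not a primitive root mod $q$'' into a congruence via reciprocity, packaging the congruences with CRT, and invoking Dirichlet/Bombieri–Vinogradov for the lower bound on the surviving count — are routine by comparison.
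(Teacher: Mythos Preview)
Your elementary outline is essentially right and would prove the theorem more cheaply than the paper does --- but you then abandon it for a false reason, and the sieve programme you sketch in its place is aimed at a non-problem.

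First, a correction to the elementary part. The choice $q\equiv 1\pmod 4$ is incompatible with your other constraints: if $q$ is a primitive root modulo $p^2$ then it is one modulo $p$, so $\big(\tfrac{q}{p}\big)=-1$; with $q\equiv 1\pmod 4$ reciprocity forces $\big(\tfrac{p}{q}\big)=\big(\tfrac{q}{p}\big)=-1$, and you cannot make $p$ a quadratic residue modulo $q$ that way. The fix is to take $p\equiv q\equiv 3\pmod 4$, which flips the sign. Concretely: fix any prime $p\equiv 3\pmod 4$, choose a primitive root $g$ modulo $p^2$, and apply Dirichlet to the progression determined by $q\equiv g\pmod{p^2}$ and $q\equiv 3\pmod 4$. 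Every such prime $q$ is a primitive root modulo $p^2$, and reciprocity gives $\big(\tfrac{p}{q}\big)=1$, so $p$ is not a primitive root modulo $q$. This already yields infinitely many pairs.

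The real gap is your assertion that ``$q$ is a primitive root modulo $p^2$'' is ``not a congruence condition but an Artin-type condition.'' For \emph{fixed} $p$ it is exactly a condition on $q\bmod p^2$: the primitive roots form $\phi\big(p(p-1)\big)$ residue classes modulo $p^2$, and landing $q$ in any single one of them suffices. There is no need for inclusion--exclusion over prime divisors of $p(p-1)$, and your sieve paragraph targets a difficulty that is not present. Artin-type issues arise only when one \emph{fixes} $q$ and seeks many $p$ --- which is the paper's orientation, not yours. The paper fixes $q\in\{7,11,19\}$, uses Heath-Brown's lemma to produce $\gg x/(\log x)^2$ primes $p\le x$ (in a progression rigged via reciprocity so that $p$ is a quadratic residue modulo $q$) for which that $q$ is a primitive root modulo $p$, and then handles the lift from $p$ to $p^2$ by a dichotomy backed by Brun--Titchmarsh and Bombieri--Vinogradov. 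This buys a quantitative count and, in one branch, infinitely many distinct $p$ paired with a single fixed $q$; your (corrected) argument buys simplicity but not that.
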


\begin{corollary}
    There are infinitely many pairs of primes $(p,q)$ so that the ratio set of
    $\{ p^j : j \geq 0\} \cup \{q^k : k \geq 0\}$ is dense in $\Q_p$ but not in $\Q_q$.
\end{corollary}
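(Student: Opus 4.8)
The plan is to deduce the corollary directly from Theorem~\ref{Theorem:Sieve} and Theorem~\ref{Theorem:PrimitiveRoot}; all of the analytic content has already been isolated in Theorem~\ref{Theorem:Sieve}, so what remains is a short formal deduction together with a little parity bookkeeping.

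First I would use Theorem~\ref{Theorem:Sieve} to fix a pair of primes $(p,q)$ with $p$ not a primitive root modulo $q$ and $q$ a primitive root modulo $p^2$, and I would arrange that $p$ and $q$ are both odd. That $q \neq 2$ is automatic: the group $(\Z/2\Z)^{\times}$ is trivial, so every odd integer is a primitive root modulo $2$, whence the hypothesis that $p$ is not a primitive root modulo $q$ already forces $q$ to be odd. For $p$, the construction carried out in Section~\ref{Section:Proof} yields $p$ odd, so we may take both primes odd and hence in the range where Theorem~\ref{Theorem:PrimitiveRoot} applies.

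Next I would apply Theorem~\ref{Theorem:PrimitiveRoot} twice to the set $A = \{ p^j : j \geq 0\} \cup \{q^j : j \geq 0\}$. Applying it with the roles of the odd prime and the base played by $p$ and $q$, respectively, $R(A)$ is dense in $\Q_p$ if and only if $q$ is a primitive root modulo $p^2$, which holds by the choice of $(p,q)$. Applying it instead with those roles played by $q$ and $p$, $R(A)$ is dense in $\Q_q$ if and only if $p$ is a primitive root modulo $q^2$. Here I would invoke the elementary fact that the reduction map $(\Z/q^2\Z)^{\times} \to (\Z/q\Z)^{\times}$ is a surjective homomorphism, so a primitive root modulo $q^2$ reduces to a primitive root modulo $q$; contrapositively, since $p$ is not a primitive root modulo $q$ it is not a primitive root modulo $q^2$, and therefore $R(A)$ is not dense in $\Q_q$. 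Letting $(p,q)$ range over the infinitely many admissible pairs supplied by Theorem~\ref{Theorem:Sieve} then completes the proof. The only step that requires any attention is the parity reduction in the previous paragraph, needed so that both primes are covered by Theorem~\ref{Theorem:PrimitiveRoot}; beyond that there is no obstacle, since all the real difficulty resides in Theorem~\ref{Theorem:Sieve}.
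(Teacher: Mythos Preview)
Your proposal is correct and is essentially the argument the paper has in mind: the corollary is stated without proof immediately after Theorem~\ref{Theorem:Sieve}, the intended deduction being exactly the combination of Theorem~\ref{Theorem:Sieve} with Theorem~\ref{Theorem:PrimitiveRoot} that you spell out. Your added bookkeeping (that $q\neq 2$ is forced and that $p$ is odd by the construction in Section~\ref{Section:Proof}, together with the observation that a non-primitive-root modulo $q$ is a fortiori a non-primitive-root modulo $q^2$) simply makes explicit the details the paper leaves to the reader.
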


Let $a \prec b$ denote ``$p$ is a primitive root modulo $b$.''  The following table
shows some of the various logical possibilities (bear in mind that a primitive root modulo 
$p^2$ is automatically
a primitive root modulo $p$).
\begin{equation*}
\begin{array}{cc|cccc}
p & q & p\prec q& q\prec p & p \prec q^2 & q \prec p^2\\
\hline
3 & 5 & T & T & T & T \\
5 & 7 & T & T & T & F \\
3 & 7 & T & F & T & F\\
5 & 11 & F & F & F & F \\
7 & 19 & F & T & F & F \\
\end{array}
\end{equation*}
Despite an extensive computer search, we were unable to find a pair $(p,q)$ of primes 
for which $p \prec q$ and $q \prec p$, but $p \not\prec q^2$ and $q \not\prec p^2$.
We hope to revisit this question in later work.  For now we are content to pose the
following question.

\begin{problem}
Is there a pair $(p,q)$ of primes 
for which $p \prec q$ and $q \prec p$, but $p \not\prec q^2$ and $q \not\prec p^2$
\end{problem}

On the other hand, numerical evidence and heuristic arguments
suggests that there are infinitely many pairs $(p,q)$
of primes for which $p \prec q^2$ and $q \prec p^2$.  Although we have
given the proof of the closely related Theorem \ref{Theorem:Sieve}, we feel that attempting to address
more questions of this nature would draw us too far afield.  Consequently,
we postpone this venture until another day.  

\begin{problem}
Prove that there infinitely many pairs of primes $(p,q)$ for which $p \prec q^2$ and $q \prec p^2$?
\end{problem}

\section{Proof of Theorem \ref{Theorem:Sieve}}\label{Section:Proof}

\noindent\textbf{Step 1}:
We start with a sieve lemma due to Heath-Brown. 
It is a weaker version of \cite[Lem.~3]{HB}. In what follows, $p$
always denotes an odd prime.  We write $(\frac{a}{p})$ 
for the Legendre symbol of $a$ with respect to $p$ and write $x \prec y$
to indicate that $x$ is a primitive root modulo $y$.

\begin{lemma}[Heath-Brown]\label{Lemma:GHB}
Let $q,r,s$ be three primes, let $k\in \{1,2,3\}$, and let 
$u,v$ be positive integers such that
\begin{enumerate}
\item $16 \mid  v$,
\item $2^k \mid (u-1)$,
\item $(\frac{u-1}{2^k},v)=1$,
\item if $p\equiv u\pmod v$, then 
\begin{equation*}
\Big(\frac{-3}{p}\Big)=\Big(\frac{q}{p}\Big)=\Big(\frac{r}{p}\Big)=\Big(\frac{s}{p}\Big)=-1.
\end{equation*}
\end{enumerate}
Then for large $x$, the set of primes
\begin{align*}
\mathcal{P} (x;u,v) 
&= \big\{p\leq x \,:\, \text{$p\equiv u\pmod v$, $(p-1)/2^k$ is prime or a product of}  \\
&\qquad\quad  \text{two primes, and one of $q,r,s$ is primitive root modulo $p$}\big\}
\end{align*}
has cardinality satisfying
\begin{equation*}
|\mathcal{P} (x;u,v)|\gg \frac{x}{(\log x)^2}.
\end{equation*}
\end{lemma}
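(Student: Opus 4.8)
The plan is to deduce Lemma~\ref{Lemma:GHB} from Heath-Brown's sieve result \cite[Lem.~3]{HB}, of which it is a weakened and specialized form: I would check that hypotheses (1)--(4) are a particular instance of (or imply) the hypotheses of \cite[Lem.~3]{HB}, and that the lower bound asserted here is no stronger than the one provided there. It is nonetheless worth recording how the proof of \cite[Lem.~3]{HB} is organized, since the present statement is cut to fit exactly what Section~\ref{Section:Progression} needs. Two ingredients are combined. First, a weighted lower-bound sieve (of Richert type) is applied to the sequence $\{(p-1)/2^k : p \leq x,\ p \equiv u \pmod v\}$ to show that $\gg x/(\log x)^2$ of these $p$ have $(p-1)/2^k$ equal to a prime or a product of two primes. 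Second, an auxiliary counting argument, fed by the quadratic non-residue conditions, removes the lower-order set of such $p$ for which none of $q$, $r$, $s$ is a primitive root modulo $p$, leaving a positive proportion with at least one of $q$, $r$, $s$ a primitive root.

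For the sieve step, conditions (1)--(3) are exactly the device that freezes the $2$-part of $p - 1$: when $16 \mid v$, $2^k \mid (u-1)$, and $(\tfrac{u-1}{2^k}, v) = 1$, every prime $p \equiv u \pmod v$ satisfies $\nu_2(p-1) = k$, so $(p-1)/2^k$ is odd and the sieve problem is of standard one-dimensional type. The condition $\left(\frac{-3}{p}\right) = -1$ in (4) is equivalent to $p \not\equiv 1 \pmod 3$, hence forces $3 \nmid (p-1)/2^k$; this removes the smallest odd prime and is convenient both for the sieve and for the non-residue step. The arithmetic input needed to turn the weighted sieve into a genuine lower bound is a level-of-distribution estimate for primes in arithmetic progressions, supplied unconditionally by the Bombieri--Vinogradov theorem, while the Brun--Titchmarsh inequality controls the individual upper-bound sieve sums. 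This is the analytic heart of \cite[Lem.~3]{HB}, and I would quote it rather than reprove it.

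For the non-residue step, fix a prime $p$ produced above and write $m = (p-1)/2^k$, a product of at most two primes, all exceeding $3$. A nonzero residue is a primitive root modulo $p$ precisely when it is not an $\ell$-th power residue for any prime $\ell \mid p-1$; for our $p$ the only possible obstructions are $\ell = 2$ and the at most two odd primes dividing $m$. Since $\left(\frac{q}{p}\right) = \left(\frac{r}{p}\right) = \left(\frac{s}{p}\right) = -1$, the obstruction $\ell = 2$ is vacuous for $q$, $r$, $s$, so each fails to be a primitive root modulo $p$ only if it is an $\ell$-th power residue for one of the at most two odd primes $\ell \mid m$. Counting the primes $p$ for which this happens to all of $q$, $r$, $s$ at once --- a condition on $p$ in further arithmetic progressions, again controllable by sieve upper bounds of Brun--Titchmarsh type --- shows their number is of smaller order than $x/(\log x)^2$; subtracting, at least $\gg x/(\log x)^2$ of the sieved primes $p$ satisfy $q \prec p$, $r \prec p$, or $s \prec p$, and these are exactly the primes counted by $\mathcal{P}(x;u,v)$.

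The genuinely hard analytic content --- the unconditional weighted sieve with Bombieri--Vinogradov playing the role of GRH and yielding the $x/(\log x)^2$ lower bound --- is not reproved here but imported from \cite[Lem.~3]{HB}; the main obstacle is already resolved in the literature. What remains is bookkeeping: confirming that (1)--(3) are precisely the congruence hypotheses needed to pin down $\nu_2(p-1) = k$, that the Legendre-symbol conditions in (4) are exactly what the non-residue step consumes, and that the conclusion stated for $\mathcal{P}(x;u,v)$ is weaker than the one in \cite[Lem.~3]{HB}. The one delicate point to get right is the interface between the two ingredients --- that ``$(p-1)/2^k$ a product of at most two primes'' together with ``$q$, $r$, $s$ all quadratic non-residues'' leaves only a lower-order exceptional set of $p$ --- which is precisely why hypothesis (4) bundles the three Legendre-symbol conditions together with $\left(\frac{-3}{p}\right) = -1$.
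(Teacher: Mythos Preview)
Your approach is correct and matches the paper's: the paper does not prove Lemma~\ref{Lemma:GHB} at all but simply attributes it to Heath-Brown as ``a weaker version of \cite[Lem.~3]{HB},'' so deducing it by citing that result is exactly what is done. Your additional sketch of how the sieve and non-residue steps work goes beyond what the paper provides, but this is supplementary exposition rather than a different route.
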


Consider the primes $q=7$, $r=11$, and $s=19$.
Let $$v=70{,}224=16\times 3\times 7\times 11\times 19$$
and observe that $16 \mid v$, so (a) is satisfied.  Let $u=2{,}951$ so that
$$u-1=2{,}950=2\times 5^2\times 59,$$ so that
(b) is satisfied with $k=1$.  Then (c) is satisfied since $u-1$ and $v$ have no
common factors.  
If $p\equiv u\pmod v$, then 
\begin{equation}\label{eq:BoCs}
p\equiv 3\pmod 4,\qquad
p\equiv 2\pmod 3,\quad  \text{and}\quad 
p\equiv 25\pmod {qrs}. 
\end{equation}
Since $q \equiv r \equiv s \equiv 3\pmod{4}$,
quadratic reciprocity ensures that $q,r,s$ are quadratic nonresidues
modulo $p$.  For instance,
\begin{equation}\label{eq:qrpq}
\Big(\frac{q}{p}\Big)=-\Big(\frac{p}{q}\Big)=-\Big(\frac{25}{p}\Big)=-1
\end{equation}
and similarity if $q$ is replaced with $r$ or $s$.
In addition,
\begin{equation*}
\Big(\frac{-3}{p}\Big)
=\Big(\frac{-1}{p}\Big) \Big(\frac{3}{p}\Big)
=\Big(\frac{p}{3}\Big)
=\Big(\frac{2}{3}\Big)=-1
\end{equation*}
by \eqref{eq:BoCs} and quadratic reciprocity.  Thus, (d) is satisfied.
\medskip

\noindent\textbf{Step 2}:
With $u=2{,}951$, $v = 70{,}224$,  $q=7$, $r=11$, and $s=19$ as above, let 
\begin{equation*}
\mathcal{P} =\{p\equiv u\pmod v\,:\,
\text{\rm one of $q,r,s$ is a primitive root modulo $p$}\}
\end{equation*}
and let $\mathcal{P} (x)=\mathcal{P} \cap [1,x]$.
Since $\mathcal{P} (x;u,v) \subseteq \mathcal{P}(x)$,
Lemma \ref{Lemma:GHB} ensures that
\begin{equation*}
|\mathcal{P} (x)|\geq \frac{x}{(\log x)^2}.
\end{equation*}
If $p \in \mathcal{P}$, then $(\frac{p}{q}) = 1$ by \eqref{eq:qrpq},
so $p$ is a quadratic residue modulo $q$
and hence it fails to be a primitive root modulo $q$.
Since \eqref{eq:qrpq} holds with 
$q$ replaced with $r$ or $s$, we conclude that
$p$ is not a primitive root modulo $q$, $r$, or $s$.
\medskip

\noindent\textbf{Step 3}:
The definition of $\mathcal{P}(x)$ ensures that
one of the primes $q,r,s$ is a primitive root 
for at least $\frac{1}{3}|\mathcal{P} (x)|$ primes $p\leq x$. 
Without loss of generality, we may assume that this prime is $q$
since the specific numerical values of $q,r,s$ are irrelevant in what follows
(save that they are all congruent to $3$ modulo $4$).
Let
\begin{equation*}
\mathcal{P} _1(x)
=
\Big\{\frac{x}{(\log x)^2}<p\leq x \,:\, 
\text{$p\equiv u\pmod v$ and $q \prec p$}\Big\}
\end{equation*}
and let $\pi(x)$ denote the number of primes at most $x$.
Then the prime number theorem implies that
\begin{align*}
|\mathcal{P} _1(x) |
& \geq  \tfrac{1}{3} |\mathcal{P} (x)|-\pi\left( \frac{x}{(\log x)^2}\right)\\
& \gg  \frac{x}{(\log x)^2}-O\left(\frac{x}{(\log x)^3}\right)\\
& \gg   \frac{x}{(\log x)^2}.
\end{align*}

\noindent\textbf{Step 4}:
Let $\mathcal{P} _{2}(x)$ be the set of $p \in \mathcal{P} _1(x)$ for which $q \prec p^2$;
let $\mathcal{P} _3(x)=\mathcal{P} _1(x)\backslash \mathcal{P} _2(x)$
be the subset for which $q \not\prec p^2$.  One of these two possibilities
must occur at least half the time.
This leads to two cases.
\medskip

\noindent\textbf{Step 4.a}:
If 
\begin{equation}\label{eq:2}
|\mathcal{P} _{2}(x)| \geq \tfrac{1}{2} |\mathcal{P} _{1}(x)|,
\end{equation}
then there are at least 
$\gg  x / (\log x)^2$ pairs of primes $(p,q)$ with $p \leq x$
for which $q \prec p^2$ and $p \not \prec q$.
\medskip

\noindent\textbf{Step 4.b}:
Suppose that 
\begin{equation}\label{eq:3}
    |\mathcal{P} _3(x)|\geq \tfrac{1}{2}|\mathcal{P} _1(x)| .
\end{equation}
For $p\in \mathcal{P} _3(x)$ consider primes of the form
\begin{equation}\label{eq:ell}
    \ell=q+4hp,\qquad  h\in [1,x^{3/2}-1]\cap \Z,
\end{equation}
first observing that $\ell \ll x^{5/2}$.
The prime number theorem for arithmetic progressions asserts that
the number of such primes is $\pi(4x^{3/2}p;4p,q)+O(1)$,
in which $\pi(x;m,a)$ denotes the number of primes at most $x$
that are congruent to $a \pmod{m}$.
For such a prime $\ell$, quadratic reciprocity and \eqref{eq:qrpq} ensure that
\begin{equation*}
    \Big(\frac{p}{\ell}\Big)
    =-\Big(\frac{\ell}{p}\Big)
    =-\Big(\frac{q+4hp}{p}\Big)
    =-\Big(\frac{q}{p}\Big)=1
\end{equation*}
since $p,q \equiv 3 \pmod{4}$.  Consequently, $p \not \prec \ell$.
Since
\begin{align*}
\ell^{p-1} 
& =  (q+4h p)^{p-1} \\
&\equiv q^{p-1}+4h(p-1) q^{p-2} p \pmod {p^2}\\
& \equiv  1+4h(p-1)q^{p-2} p\pmod {p^2},
\end{align*}
we see that $\ell$ is a primitive root modulo $p^2$ whenever $p \nmid h$.
The Brun-Titchmarsh theorem ensures that 
the number of primes $\ell$ of the form \eqref{eq:ell} for which $p \mid h$ is
\begin{equation*}
\pi(4x^{3/2}p;4p^2,q)+O(1)\ll \frac{x^{3/2} p}{p^2 \log(x^{3/2} p)}\ll \frac{x^{3/2}}{p\log x}.
\end{equation*}
Thus, for each $p \in \mathcal{P}_3(x)$, there are
\begin{equation*}
\pi(4x^{3/2}p;4p,q)-\pi(4x^{3/2}p;4p^2,q)+O(1)
\end{equation*}
primes $\ell \leq x^{5/2}$ for which $p \not \prec \ell$ and $\ell \prec p^2$.
\medskip

\noindent\textbf{Step 5}:
Let $\mathcal{P} _4(x)$ be the subset of $\mathcal{P} _3(x)$ such that 
\begin{equation}\label{eq:5}
\pi(4x^{3/2}p;4p,q)-\pi(4x^{3/2}p;4p^2,q)\geq \frac{x^{3/2}}{(\log x)^2},
\end{equation}
and let $\mathcal{P} _5(x)=\mathcal{P}_3(x)\backslash \mathcal{P} _4(x)$.
We intend to show that $|\mathcal{P} _5(x)|$ is small
and hence that \eqref{eq:5} holds for most $p \in \mathcal{P}_3(x)$.
Suppose that $p\in \mathcal{P} _5(x)$; that is,
\begin{equation*}
\pi(4x^{3/2}p;4p,q)-\pi(4x^{3/2}p;4p^2,q)< \frac{x^{3/2}}{(\log x)^2}.
\end{equation*}
Then 
\begin{align*}
&\left|\pi(4x^{3/2}p;4p;q) - \frac{\pi(4x^{3/2}p)}{\phi(4p)}\right| \\
&\qquad \geq  \frac{\pi(4x^{3/2}p)}{\phi(4p)} -  \big|\pi(4x^{3/2}p;4p,q) -  \pi(4x^{3/2}p;4p^2,q)\big|
 -  \pi(4x^{3/2}p;4p^2,q)\\
&\qquad \gg  \frac{x^{3/2}}{\log x}+O\left(\frac{x^{3/2}}{(\log x)^2}+\frac{x^{3/2}}{p\log x}\right)\\
&\qquad \gg  \frac{x^{3/2}}{\log x}
\end{align*}
because $p\geq x/(\log x)^2$ by the definition of $\mathcal{P}_1(x)$. 
Thus, 
\begin{equation*}
    \max_{\substack{1\leq a\leq 4p \\ (a,4p)=1\\ 1\leq y\leq 4x^{5/2}}}
     \Big|\pi(y;4p,a)-\frac{\pi(y)}{\phi(4p)}\Big|\geq 
    \Big|\pi(4x^{3/2} p;4p,q)-\frac{\pi(4x^{3/2}p)}{\phi(4p)}\Big|
    \gg \frac{x^{3/2}}{\log x}.
\end{equation*}
Summing up the above inequality over all $p\in \mathcal{P} _5(x)$
and appealing to the Bombieri-Vinogradov theorem, for large $x$ we obtain
\begin{align*}
\left(\frac{x^{3/2}}{\log x}\right) |\mathcal{P} _5(x)|
& \ll \sum_{p\in \mathcal{P} _5(x)} \,\,
 \max_{\substack{1\leq a\leq 4p\\ (a,4p)=1\\ 1\leq y\leq 4x^{5/2}}} 
 \Big|\pi(y;4p,a)-\frac{\pi(y)}{\phi(4p)}\Big| \\
& \ll \sum_{m\leq 4x} \max_{\substack{1\leq a\leq m:\\ (a,m)=1\\  y\leq x^{5/2}}} 
    \Big|\pi(y;m,a)-\frac{\pi(y)}{\phi(m)}\Big| \\
&\ll x^{5/2}/(\log x)^4.
\end{align*}
Thus,
\begin{equation*}
|\mathcal{P} _5(x)| \ll \frac{x}{(\log x)^3}
\end{equation*}
and hence
\begin{equation*}
 |\mathcal{P} _4(x)|
 =|\mathcal{P} _3(x)|-|\mathcal{P} _5(x)|
 \geq \tfrac{1}{2}|\mathcal{P} _3(x)|
 \gg \frac{x}{(\log x)^2}
\end{equation*}
when \eqref{eq:3} holds and $x$ is sufficiently large.
Comparing this with \eqref{eq:5}, we conclude that the number of pairs $(p,\ell)$ with 
$p \not \prec \ell$ and $\ell \prec p^2$ and $p<\ell\leq 4x^{5/2}$ is
\begin{equation}\label{eq:CSTTS}
 \gg \left(\frac{x^{3/2}}{(\log x)^2}\right)\left(\frac{x}{(\log x)^2}\right)\gg \frac{x^{5/2}}{(\log x)^4}.
\end{equation}

\noindent\textbf{Step 6}.
If \eqref{eq:2} holds, then the number of pairs $(p,q)$ for which $p \in \mathcal{P}_1(x)$,
$q \prec p^2$, and $p \not \prec q$ is $\gg x / (\log x)^2$, which is dominated
by \eqref{eq:CSTTS} for large $x$.  For such $p$ we have 
$\max\{p,q\}\leq x<4x^{5/2}$. If $y=4x^{5/2}$ is sufficiently large, then
the number of pairs of primes $(p,q)$ with $\max\{p,q\}\leq y$ 
and for which $p \not \prec q$ and $q \prec p^2$ is
\begin{equation*}
 \gg \frac{x}{(\log x)^2} \gg \frac{y^{2/5}}{(\log y)^2}. 
\end{equation*}
This completes the proof of Theorem \ref{Theorem:Sieve}. \qed
 
 \begin{remark}
A more careful application of the Bombieri-Vinogradov theorem shows that this count can be improved to $y^{1/2-\varepsilon}$ for any $\varepsilon>0$ fixed (just replace the range for $h$ in \eqref{eq:ell} $x^{3/2}$ by $x^{1+\varepsilon}$), or even to $y^{1/2}/(\log y)^A$ for some constant $A>0$, but we do not get into such details. \end{remark}

\section*{Acknowledgements}
F. L. was supported in part by grants CPRR160325161141 and an A-rated researcher award both from the NRF of South Africa and by grant no. 17-02804S of the Czech Granting Agency.

\bibliographystyle{amsplain}
\providecommand{\bysame}{\leavevmode\hbox to3em{\hrulefill}\thinspace}
\providecommand{\MR}{\relax\ifhmode\unskip\space\fi MR }
\providecommand{\MRhref}[2]{%
  \href{http://www.ams.org/mathscinet-getitem?mr=#1}{#2}
}
\providecommand{\href}[2]{#2}

\end{document}